\numberwithin{equation}{section}
\theoremstyle{plain}
\newtheorem{theorem}{Theorem}[section]
\newtheorem{lemma}[theorem]{Lemma}
\newtheorem{cor}[theorem]{Corollary}
\theoremstyle{definition}
\newtheorem{definition}[theorem]{Definition}
\newtheorem{exa}[theorem]{Example}
\newtheorem{que}[theorem]{Question}
\newcommand{\st}{\ :\ }
\newcommand{\A}{\mathbf{A}}
\newcommand{\B}{\mathbf{B}}
\newcommand{\C}{\mathcal{C}}
\newcommand{\M}{\mathbf{M}}
\newcommand{\N}{\mathbb{N}}
\newcommand{\R}{\mathbf{R}}
\renewcommand{\S}{\mathbf{S}}
\newcommand{\Z}{\mathbb{Z}}
\newcommand{\Clo}{\mathrm{Clo}}
\newcommand{\GL}{\mathrm{GL}}
\newcommand{\Pol}{\mathrm{Pol}}
\newcommand{\rk}{\mathrm{rk}}
\newcommand{\Hom}{\mathrm{Hom}}
\title{Clonoids between modules}
\date{\today}
\author[P. Mayr]{Peter Mayr}
\address{Department of Mathematics \\
University of Colorado \\ Boulder CO 80309 \\USA}
\email{peter.mayr@colorado.edu\\ patrick.wynne@colorado.edu}
\author[P. Wynne]{Patrick Wynne}
\thanks{The research of the first author was partially supported by the NSF under grant no.~DMS-1500254  and by the Austrian Science Fund (FWF): P33878.}
\subjclass{08A40, 08A02}
\keywords{closed function classes, clones, abelian Mal'cev algebras, linear functional equations}
\begin{document}

\begin{abstract}
 Clonoids are sets of finitary functions from an algebra $\A$ to an algebra $\B$ that are closed under composition with
 term functions of $\A$ on the domain side and with term functions of $\B$ on the codomain side.
 For $\A,\B$ (polynomially equivalent to) finite modules we show:
 If $\A,\B$ have coprime order and the congruence lattice of $\A$ is distributive, then there are only finitely many clonoids
 from $\A$ to $\B$.
 This is proved by establishing for every natural number $k$ a particular linear equation that 
 all $k$-ary functions from $\A$ to $\B$ satisfy.  
 Else if $\A,\B$ do not have coprime order, then there exist infinite
 ascending chains of clonoids from $\A$ to $\B$ ordered by inclusion.
 Consequently any extension of $\A$ by $\B$ has countably infinitely many $2$-nilpotent expansions up to term
 equivalence.
\end{abstract} 

\maketitle

%%%%%%%%%%%%%%%          Section 1:   Introduction          %%%%%%%%%%%%%%%

\section{Introduction}

 Clones are sets of finitary operations on a set that are closed under composition and contain all projection maps.
 They have been used for studying completeness of Boolean operations (Post~\cite{Po:TVIS}),
 classifying algebraic structures (see the books~\cite{FMMT:ALV3,La:FAFS,Sz:CUA}),
 and capturing the computational complexity of Constraint Satisfaction Problems (see the survey~\cite{BKW:PHU}) for many
 decades.

 More recently and more generally, sets of finitary functions from one set to another that satisfy certain closure properties
 have found applications, e.g., for studying equational theories of finite algebras~\cite{AM:FGEC,Ma:VLN},
 classifying expansions of algebras~\cite{Fio:EAS,Kr:CFS},
 and investigating the computational complexity of Promise Constraint Satisfaction Problems (see the survey~\cite{BBKO}).
 These function classes fit into the general framework of \emph{clonoids}: sets of finitary functions from an algebra
 $\A$ to an algebra $\B$ that are closed under composition with term functions of $\A$ on one side and composition with
 term functions of $\B$ on the other.

Let $\N := \{1, 2, 3, \ldots\}$ denote the set of natural numbers.

\begin{definition}\cite[cf. Definition 4.1]{AM:FGEC} \label{def:clonoid}
 Let $\A$ and $\B$ be algebras and let $C \subseteq \bigcup_{n \in\N} B^{A^n}$ be a set of finitary functions from $A$ to $B$.
 We say that $C$ is a \emph{clonoid from $\A$ to $\B$} if
 \[C \Clo(\A) \subseteq C \, \text{ and } \, \Clo(\B) C \subseteq C.\]
 Here $\Clo(\A)$ and $\Clo(\B)$ are the clones of term functions of $\A$ and $\B$, respectively, and
 juxtaposition represents function class composition as in \cite{CF:FCRC}, i.e.
 \begin{align*}
  C \Clo(\A)  =  \{ f(s_1,\dots,s_k) \st k,m\in\N, f\, k\text{-ary in } C,  s_1,\dots,s_k\ & \\    m\text{-ary term functions on } \A \},  \\
 \Clo(\B)C = \{ t(f_1,\dots,f_n) \st n,k\in\N, t\, n\text{-ary term function on } \B & \\ f_1,\dots,f_n\, k\text{-ary in } C \}.
\end{align*}  
\end{definition}

 For example, for abelian groups $\A$ and $\B$, the set $\bigcup_{k\in\N} \Hom(\A^k,\B)$
 of homomorphisms from finite powers of $\A$ to $\B$ forms a clonoid from $\A$ to $\B$.

 The name `clonoid' was originally introduced by Aichinger and the first author of this paper in~\cite{AM:FGEC}
 for the case that $\A$ is a set with no operations. There clonoids were used to represented equational theories and
 to show that every subvariety of a finitely generated variety with cube term is finitely generated.
 For subsequent applications the slight generalization that allows for $\A$ an algebra as in Definition~\ref{def:clonoid}
 now appears to be more desirable. We mention some other contexts in which clonoids occurred.

 Already in 2002, Pippenger developed a Galois correspondence for clonoids between sets, which he called
 \emph{minor-closed classes},
and pairs of relational structures in~\cite{Pi:GTMF}.
The theory was extended by Couceiro and Foldes in~\cite{CF:FCRC}
 to clonoids from an algebra $\A$ to an algebra $\B$, which they called classes of functions that are \emph{stable} under
 right composition with $\Clo(\A)$ and left composition with $\Clo(\B)$ (see Section~\ref{sec:Galois} below).

 In 2018,
 Brakensiek and Guruswami observed that Pippenger's Galois correspondence can be used to analyze Promise
 Constraint Satisfaction Problems via their finitary symmetries, i.e., sets of polymorphisms between relational
 structures. In this context, clonoids from sets $A$ to $B$ have been called~\emph{minions} (see the survey by Barto,
 Bul\'{\i}n, Krokhin, and Opr\v{s}al~\cite{BBKO}). 

 Kreinecker~\cite{Kr:CFS} and Fioravanti~\cite{Fio:CSPFF,Fio:EAS} studied clonoids between modules $\A$ and $\B$,
 which they called \emph{linearly-closed clonoids}, and used them to classify expansions of $\A\times\B$ up to term
 equivalence.
 For algebras $\A_1,\A_2$ on the same universe, $\A_1$ is an \emph{expansion} of $\A_2$ if
 $\Clo(\A_1)\supseteq\Clo(\A_2)$; we say $\A_1$ and $\A_2$ are \emph{term equivalent} if $\Clo(\A_1) = \Clo(\A_2)$.
 
 All of the function classes mentioned above are instances of clonoids from $\A$ to $\B$ in the sense of
 Definition~\ref{def:clonoid} for the appropriate choice of algebras.

 In this paper we collect basic properties and background on clonoids in Section~\ref{Prelims}. 
 Then we mainly investigate clonoids between algebras that are polynomially equivalent to modules
 in the vein of~\cite{Fio:CSPFF,Kr:CFS}.
 Recall that an algebra $\A$ is polynomially equivalent to a module if all basic operations of $\A$ are affine functions
 on that module (see Section~\ref{sec:Malcev}).
Throughout this paper we will consider rings with $1$ that need not be commutative. 
 All modules considered are unitary modules.

 In particular we consider the following problem:
 
\begin{que} \label{MainQuestion}
 Given finite (algebras that are polynomially equivalent to) modules $\A$ and $\B$, how many clonoids are there from
 $\A$ to $\B$? Are they all finitely generated?
\end{que}

 From work of Aichinger and the first author of this paper in~\cite{AM:FGEC} it follows that for $\B$ a finite algebra with
 few subpowers (in particular, for $\B$ polynomially equivalent to a module), all clonoids from a finite algebra $\A$ to
 $\B$ are finitely related and hence there are at most countably infinitely many (see Theorem~\ref{thm:cube}).

 There has been some recent activity in classifying clonoids of Boolean functions in particular.
 Sparks~\cite{Sp:NC} determined the cardinality of clonoids from a finite set $A$ to any 2-element
 algebra $\B$. In particular she showed that there are countably infinitely many clonoids from $A$ with $|A|>1$
 to $(\Z_2,+)$.
 Couceiro and Lehtonen~\cite{CL:SBF} explicitly described all the countably infinitely many clonoids from
 $(\Z_2,x+y+z)$ to itself. Moreover, Lehtonen~\cite{Le:MC,Le:NUC} explicitly described the finitely many clonoids
 between $2$-element algebras $\A$ and $\B$ where $\B$ has a majority, or more generally, near unanimity term.

 Kreinecker~\cite{Kr:CFS} showed that there are countably infinitely many clonoids from $(\Z_p,+)$ to $(\Z_p,+)$ for
 any prime $p$, which he then used to construct infinitely many non-finitely generated clones on $\Z_p^2$ that contain $+$.
For modules $\A$ and $\B$, we say that $f \colon A^k \rightarrow B$ is additive if 
$f(x+y) = f(x) + f(y)$ for all $x,y \in A^k$.
We generalize Kreinecker's result to obtain the following in Section~\ref{sec:number}.

\begin{theorem} \label{thm:notcoprime}
 Let $\A,\B$ be finite modules whose orders are not coprime.
 Then there exists an infinite ascending chain of clonoids from $\A$ to $\B$ (that contain all additive
 functions from $\A$ to $\B$).
\end{theorem}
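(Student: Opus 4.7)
Let $p$ be a prime dividing $\gcd(|A|,|B|)$. Since $p \mid |A|$ forces $p\A \subsetneq \A$, fix $a \in \A \setminus p\A$; then $\bar a := a + p\A$ is a nonzero element of $\A/p\A$. Similarly $p \mid |B|$ yields a nonzero $b \in \B[p]$. For each $n \in \N$, define
\[
f_n(x_1,\dots,x_n) = \begin{cases} b & \text{if } x_i - a \in p\A \text{ for every } i, \\ 0 & \text{otherwise,} \end{cases}
\]
and let $C_n$ be the clonoid from $\A$ to $\B$ generated by $\{f_n\} \cup \bigcup_{k \ge 1} \Hom(\A^k, \B)$. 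The identity $f_{n+1}(x_1,\dots,x_n,x_n) = f_n(x_1,\dots,x_n)$, together with the fact that diagonal identification is an $\A$-term, gives $C_n \subseteq C_{n+1}$. It remains to prove $f_{n+1} \notin C_n$ for each $n$, which yields the desired strictly ascending chain (each $C_n$ containing all additive functions by construction).

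\textbf{Reduction to $\F_p$-vector spaces.} Any arity-$(n+1)$ element of $C_n$ has the form
\[
g(x) = h(x) + \sum_{i=1}^{m} s_i\, f_n\bigl(L_{i,1}(x),\dots,L_{i,n}(x)\bigr),
\]
with $h$ additive, $s_i \in R_\B$, and each $L_{i,j}$ an $R_\A$-linear form in $x = (x_1,\dots,x_{n+1})$. Every $R_\A$-linear form preserves $p\A$, so every summand — as well as $f_{n+1}$ itself — depends only on $\bar x \in (\A/p\A)^{n+1}$. An assumed equality $g = f_{n+1}$ then forces $h$ to depend only on $\bar x$ and to take values in $\B[p]$, and the identity descends to
\[
\tilde f_{n+1}(\bar x) = \bar h(\bar x) + \sum_{i=1}^{m} s_i\, \tilde f_n\bigl(\bar L_{i,1}(\bar x),\dots,\bar L_{i,n}(\bar x)\bigr)
\]
between functions $V^{n+1} \to W$, where $V := \A/p\A$ and $W := \B[p]$ are $\F_p$-vector spaces and $\tilde f_k(\bar y_1,\dots,\bar y_k) = b\cdot\mathbf{1}\{\bar y_1 = \cdots = \bar y_k = \bar a\}$.

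\textbf{Degree argument and main obstacle.} To rule out the descended identity I would use the polynomial representation of functions on $\F_p$-vector spaces. Choose an $\F_p$-linear functional $\lambda \colon W \to \F_p$ with $\lambda(b) \neq 0$ (possible since $W$ is a nonzero $\F_p$-vector space containing $b$), and post-compose to obtain an $\F_p$-valued identity on $V^{n+1}$. On the right-hand side, $\lambda \circ \bar h$ is a polynomial of degree $1$, and each summand $\lambda(s_i b)\cdot\mathbf{1}\{\bar L_{i,1}(\bar x) = \cdots = \bar L_{i,n}(\bar x) = \bar a\}$ is the indicator of a zero set of at most $n\cdot \dim_{\F_p}(V)$ scalar $\F_p$-linear equations, hence a reduced polynomial in the $(n+1)\dim_{\F_p}(V)$ coordinate variables of total degree at most $n\cdot\dim_{\F_p}(V)\cdot(p-1)$. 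But $\lambda\circ\tilde f_{n+1}$ is $\lambda(b) \neq 0$ times the indicator of a single point of $V^{n+1}$, a reduced polynomial of total degree exactly $(n+1)\cdot\dim_{\F_p}(V)\cdot(p-1)$: the top monomial $\prod_j x_j^{p-1}$ appears with nonzero coefficient, which cannot be the case on the right-hand side. The main obstacle is setting up the reduction carefully — verifying that every $R_\A$-linear form descends to $\A/p\A$, that the additive piece $h$ is forced into $\B[p]$, and that the degree comparison is well-defined using uniqueness of the reduced polynomial representation — so that the simple $\F_p$-side degree gap yields the full module-theoretic conclusion.
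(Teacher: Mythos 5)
Your proof is correct and shares the paper's core idea — a polynomial-degree argument over $\F_p$ — but the route is genuinely different. The paper first passes from $\A,\B$ to a simple $p$-power quotient $\A'$ and a simple $p$-power submodule $\B'$, then (via Jacobson density) \emph{expands} these to vector spaces $\A^+\cong\Z_p^m$, $\B^+\cong\Z_p^n$ over full matrix rings, builds the chain there from the multilinear monomials $f_k(x)=(x_{1,1}\cdots x_{k,1},0,\dots,0)$ distinguished by their total degrees, and finally transports the chain back to $\A,\B$ using Lemmas~\ref{lem:expansions} and~\ref{quotients}. You instead stay on $\A,\B$ throughout, generate the chain by indicator functions of the diagonal coset $(a+p\A)^n$, and then show a purported representation of $f_{n+1}$ by $f_n$-minors and additives must descend to an identity of $\F_p$-valued functions on $(\A/p\A)^{n+1}$, which you rule out by comparing reduced degrees (the indicator of a point in $\F_p^{(n+1)d}$ has degree $(n+1)d(p-1)$, strictly more than any summand on the right). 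The paper's reduction gives cleaner bookkeeping — once on $\Z_p^m$ over $\Z_p^{m\times m}$, "the components of any $g\in D+\langle f_2,\dots,f_k\rangle$ have degree at most $k$" is almost immediate — at the cost of invoking the lifting machinery and the Jacobson density theorem; your approach is more elementary and self-contained but requires careful verification of the descent (that $R_\A$-forms preserve $p\A$, that the additive part lands in $\B[p]$ and factors through $(\A/p\A)^{n+1}$, and that reduction of polynomials only decreases total degree). One small point worth making explicit when you write this up: $C_n\subsetneq C_{n+1}$ requires $d:=\dim_{\F_p}(\A/p\A)\geq1$, which indeed holds because $p\mid|A|$ forces $p\A\neq\A$; and the $\F_p$-vector-space structure on $\B[p]$ (and $\F_p$-linearity of the descended maps) should be spelled out since $\B$ itself need not be an $\F_p$-space.
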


Together with Lemma~\ref{lem:ABexpansion} below and the subsequent remark, this yields
 a result of Idziak~\cite{Id:CCMO}
that any finite module $\A$ with a submodule $B$ and $\gcd(|A/B|,|B|) > 1$ has countably
 infinitely many $2$-nilpotent expansions up to term equivalence.

Fioravanti proved for $\A$ and $\B$ finite coprime regular modules over products of finite fields that every clonoid
from $\A$ to $\B$ is generated by its unary functions (and so there are finitely many of
them)~\cite[Theorem 1.2]{Fio:CSPFF}. 
 Note that under his assumptions the lattices of submodules of $\A$ and $\B$ are distributive (in fact, Boolean).
 As a consequence, he then showed that a finite abelian group has finitely many expansions up to term equivalence
 if and only if the order of that group is squarefree in~\cite{Fio:EAS}.
 
 Following \cite{Co:FIR,Tu:SSM}, a module is \emph{distributive} if its lattice of submodules is distributive.
 Our main result generalizes Fioravanti's result and settles Question~\ref{MainQuestion} for $\A$ (polynomially
 equivalent to) a distributive module.
 
\begin{theorem} \label{thm:distributive}
 Let $\A$ be polynomially equivalent to a finite distributive $\R$-module, let $n$ be the nilpotence degree of the
 Jacobson radical of $\R$, and let $\B$ be polynomially equivalent to an $\S$-module such that $|A|$ is
 invertible in $\S$.
\begin{enumerate}
\item    
 Then every clonoid from $\A$ to $\B$ is generated by its $n+1$-ary functions (by its $n$-ary functions if $\A$ is an
 $\R$-module).
\item
 If $\B$ is finite, then there are only finitely many clonoids from $\A$ to $\B$. 
\end{enumerate}
\end{theorem}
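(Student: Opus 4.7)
The plan is to prove (1) by establishing, for every sufficiently large arity $k$, a \emph{linear functional equation} satisfied by every function $f\colon A^k \to B$ that expresses $f(x_1,\dots,x_k)$ as a $\Clo(\B)$-combination of $k$-ary functions each depending on at most $n+1$ of its coordinates (or at most $n$ coordinates if $\A$ is itself an $\R$-module). Induction on arity then collapses every $k$-ary member of a clonoid $C$ to a $\Clo(\B)$-combination of $(n+1)$-ary members of $C$, proving (1). Part (2) follows immediately by counting: with $\B$ finite, $B^{A^{n+1}}$ is a finite set, so it has only finitely many subsets, hence only finitely many possible sets of $(n+1)$-ary generators, and therefore only finitely many clonoids.

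A first step is to reduce to the case where $\A$ and $\B$ are literally $\R$- and $\S$-modules rather than only polynomially equivalent to them; the need to express polynomial constants via term functions is what accounts for the extra arity in (1). To produce the linear equation I would use two inputs. Structurally, $\A$ admits the radical filtration $0 = J^n\A \leq J^{n-1}\A \leq \cdots \leq J\A \leq \A$ of length $n$, and its submodule lattice is a finite distributive lattice satisfying $M + (N \cap L) = (M+N) \cap (M+L)$. Analytically, since $|A|$ is invertible in $\S$, the same holds for $|M|$ for every submodule $M \leq \A$, so any $\B$-valued function can be \emph{averaged} over cosets of $M^k$, with the average lying in the $\Clo(\B)$-span of translates of the original function. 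Combining these, I would write $f$ as a M\"obius-type alternating sum over the distributive submodule lattice of averages of $f$ along the radical filtration in each coordinate. When $k$ exceeds the filtration length, the pigeonhole principle forces two coordinates to lie in the same level, and distributivity of the submodule lattice permits those two coordinates to be identified, collapsing the corresponding summand to essential arity within the target bound.

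The main obstacle will be turning this M\"obius inversion into an explicit, solvable linear equation: one must verify both that every coefficient produced actually lies in the $\S$-span of $\Clo(\B)$-operations (this is where invertibility of $|A|$ in $\S$ enters critically) and that after inclusion-exclusion each non-$f$ summand genuinely collapses to bounded essential arity. In a modular but non-distributive submodule lattice that collapse fails and the argument would leave residual summands of full arity, which is precisely why the hypothesis on $\A$ must be distributivity rather than merely modularity.
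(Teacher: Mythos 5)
Your high-level strategy matches the paper's: establish, for each $k$, a linear functional equation on all of $F(A,B)^{(k)}$ via averaging (exploiting invertibility of $|A|$ in $\S$), reduce from polynomial equivalence to the genuine module case at the cost of one extra arity, and deduce (2) from (1) by a trivial finiteness count. However, there is a genuine error in the formulation of the linear equation you aim to prove, and the mechanism you describe for producing it would not close the argument.

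The target you state is that every $f\colon A^k\to B$ is a $\Clo(\B)$-combination of functions \emph{depending on at most $n+1$ of their coordinates}. This is strictly stronger than, and in fact false compared to, what is needed: the correct claim (Theorem~\ref{thm:Zn} in the paper) is that $f$ is a $\Clo(\B)$-combination of $n$-ary $\A,\B$-\emph{minors}, i.e.\ functions $x\mapsto f(rx)$ where $r\in R^{k\times k}$ has inner rank at most $n$, equivalently functions that \emph{factor through} $n$-ary term operations of $\A$ but may still depend essentially on all $k$ coordinates. Already for $\A = (\Z_2,+)$ (so $n=1$) and $\B$ of order $3$ the two notions diverge: the $\Z_3$-span of binary functions depending on a single coordinate is a proper subspace of $B^{A^2}$ (of dimension $3$ rather than $4$), so not every binary $f$ is a combination of essentially-unary functions, yet every $f$ \emph{is} a combination of unary minors such as $f(x_1+x_2,0)$ and $f(x_1,x_1)$, exactly as in the paper's worked example following Theorem~\ref{thm:Zn}. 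Consequently the pigeonhole/coordinate-identification step you sketch cannot be made to work: collapsing ``two coordinates in the same filtration level'' produces coordinate-identified minors but misses the linear-substitution minors that are actually required, and the residual terms do not vanish.

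The paper's mechanism for producing the linear equation is different. Rather than an inclusion--exclusion along the radical filtration with coordinate-identification, it covers $A^k$ by the $\R$-submodules $N$ satisfying $(JA)^k\le N$ with $N/(JA)^k\cong A/JA$, together with the cubes $L^k$ for proper submodules $L<\A$ (Lemma~\ref{lem:free}); the overlaps of distinct $N$ fall inside such cubes, so a function vanishing on all proper cubes decomposes as $f=\sum_N f_N$. Each $f_N$ is handled by two interpolation lemmas (Lemmas~\ref{lem:gM} and~\ref{lem:fN}) that use averaging over $I$- and $M$-cosets to isolate support on $A\times M^{k-1}$ and on $N$, and by an induction on $|A|$ feeding in the $(n-1)$-ary result for $JA$. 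Distributivity enters not through a lattice M\"obius formula but through the structural consequence (Lemma~\ref{lem:distributive}) that $\A$ decomposes over a product of finite local rings as a direct sum of uniserial modules, which is what makes the covering of $A^k$ by the $N$'s and the $L^k$'s possible. You would need to replace your M\"obius/pigeonhole step with something of this kind before the sketch becomes a proof.
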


 For example, let $m\in\N$ and let $n$ be the exponent of the largest power of a prime $p$ such that $p^n$ divides $m$.
 Then $\A := (\Z_m,x-y+z)$ is polynomially equivalent to $\A_0 := (\Z_m,+)$, the regular module of the ring
 $\R:=(\Z_m,+,\cdot,1)$. Clearly $\A_0$ is distributive and the Jacobson radical $J(\R)$ satisfies $J(\R)^n=0$.
 Let $\B$ be a finite module of order coprime to $m$.
 By Theorem~\ref{thm:distributive} every clonoid from $\A_0$ into $\B$ is generated by $n$-ary functions and
 every clonoid from $\A$ to $\B$ is generated by $n+1$-ary functions.

 We first prove Theorem~\ref{thm:distributive} for modules $\A,\B$ in Section~\ref{sec:distributive}.
 The key observation for this is that under the given assumptions, for every $k\in\N$ we can find a specific linear
 equation that is satisfied by every $k$-ary function from $\A$ to $\B$ (Theorem~\ref{thm:Zn}).
 As it turns out, we can uniformly interpolate $k$-ary functions from $\A$ to $\B$ by their $n$-ary $\A,\B$-minors
 (see Sections~\ref{sec:general},~\ref{sec:uniform}). Our proofs for these interpolation results make essential use
 of the structure of the distributive module $\A$ (see Section~\ref{sec:dstructure}).
 
 In Section~\ref{sec:abelian} we then extend these results from modules to algebras that are polynomially equivalent to
 modules. The main difference there is that if $\A$ does not have a constant term function $0$, we have to replace
 that by a projection, i.e., an additional variable $z$, in our interpolation arguments. This additional variable
 yields that the clonoids in the general case are generated by $n+1$-ary functions instead of $n$-ary as for modules.
 
 Finally we investigate necessary conditions such that every clonoid from $\A$ to $\B$ is generated by $n$-ary
 functions in Section~\ref{sec:number}. We show that every subalgebra of $\A$ has to be generated by $n$ elements in
 Lemma~\ref{noncyclic}.
 Further, if every clonoid from a regular $\R$-module $\A$ to $\B$ is generated by unary functions, then $\R$ is semisimple
 (Lemma~\ref{JacobsonRadical}).
 Hence for a ring $\R$ with $J(\R)\neq 0$ and $J(\R)^2=0$, the assertion of Theorem~\ref{thm:distributive}
 that all clonoids from distributive $\R$-modules $\A$ to coprime modules $\B$ are generated by binary functions
 cannot be strengthened to unary functions.

 As consequence and partial converse to Theorem~\ref{thm:distributive} we can characterize the pairs of finite modules
 $\A$ over commutative rings and modules $\B$ such that every clonoid from $\A$ to $\B$ is generated by unary functions.

\begin{theorem}\label{CommutativeSummary}
 For a finite faithful $\R$-module $\A$ over a commutative ring $\R$ and a finite $\S$-module $\B$ the following are equivalent:
\begin{enumerate}
\item Every clonoid from $\A$ to $\B$ is generated by unary functions.
\item \label{it:direct} 
 The orders of $\A$ and $\B$ are coprime,
 $\R$ is a direct product of finite fields and $\A$ is isomorphic to the regular $\R$-module.
\end{enumerate}  
\end{theorem}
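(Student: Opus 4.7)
My plan is to prove the two implications separately, with most of the work already done by the previously established results. For (2) $\Rightarrow$ (1) I would apply Theorem \ref{thm:distributive}(1). Writing $\R$ as a finite product of finite fields gives $J(\R)=0$, so the nilpotence degree of the Jacobson radical equals $1$. The regular $\R$-module $\A$ has a Boolean (hence distributive) lattice of submodules. Coprime orders make multiplication by $|A|$ invertible on $\B$, so after replacing $\S$ by a ring with the same action on $\B$ we may assume $|A|$ is a unit in $\S$. Since $\A$ is an $\R$-module (not merely polynomially equivalent to one), Theorem \ref{thm:distributive}(1) then gives that every clonoid from $\A$ to $\B$ is generated by its $n=1$-ary, i.e., unary, functions.

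For (1) $\Rightarrow$ (2), I would establish the three conclusions of (\ref{it:direct}) in turn. First, coprime orders: if $\gcd(|A|,|B|)>1$, then Theorem \ref{thm:notcoprime} yields an infinite ascending chain of clonoids from $\A$ to $\B$. But finiteness of $A$ and $B$ makes $B^A$ finite, so only finitely many distinct sets of unary functions exist and hence only finitely many clonoids can be generated by unary functions, contradicting (1). Therefore $\gcd(|A|,|B|)=1$.

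Second, $\A \cong \R$ as $\R$-modules: Lemma \ref{noncyclic} applied with $n=1$ forces every $\R$-submodule of $\A$ to be singly generated, so in particular $\A=\R a$ for some $a\in A$. Faithfulness gives $\Ann(\A)=0$, and since $\R$ is commutative one has $\Ann(a)=\Ann(\R a)=\Ann(\A)=0$; combined with the isomorphism $\R a\cong \R/\Ann(a)$, this yields $\A\cong\R$ as $\R$-modules. Third, $\R$ is a direct product of finite fields: with $\A$ now identified as the regular $\R$-module, Lemma \ref{JacobsonRadical} applies and produces that $\R$ is semisimple; commutativity together with Artin--Wedderburn then gives $\R$ as a direct product of fields, and finiteness of $\R\cong\A$ forces each factor field to be finite.

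The main obstacle is minor and essentially bookkeeping: checking that the hypothesis ``$|A|$ invertible in $\S$'' in Theorem \ref{thm:distributive} can legitimately be arranged from coprime orders alone (by extending or replacing $\S$ without affecting the clonoids). Apart from this, the proof is a direct assembly of Theorems \ref{thm:notcoprime} and \ref{thm:distributive} with Lemmas \ref{noncyclic} and \ref{JacobsonRadical} plus the standard structure theorem for commutative semisimple rings.
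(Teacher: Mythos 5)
Your proof is correct and follows essentially the same route as the paper: Theorem~\ref{thm:distributive} for $(2)\Rightarrow(1)$, and Lemma~\ref{noncyclic} (cyclicity), commutativity plus faithfulness ($\A\cong\R$), Lemma~\ref{JacobsonRadical} ($J(\R)=0$), and Wedderburn--Artin for $(1)\Rightarrow(2)$. One thing you do that the paper's own proof of $(1)\Rightarrow(2)$ does not spell out is deriving the coprimality of $|A|$ and $|B|$; your argument via Theorem~\ref{thm:notcoprime} is exactly the right way to fill that in, since an infinite ascending chain of clonoids between finite $\A,\B$ cannot all be generated by unary functions. As for the bookkeeping concern you flag: it does resolve. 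Replace $\S$ by its image $\S_0$ in $\mathrm{End}_{\Z}(B)$ (this does not change $\Clo(\B)$, hence not the clonoids); since $\gcd(|A|,|B|)=1$, multiplication by $|A|$ is a bijection on $B$ with inverse multiplication by some integer $m$ with $m|A|\equiv 1\pmod{\exp(B)}$, and both $|A|\cdot 1_{\S_0}$ and $m\cdot 1_{\S_0}$ lie in $\S_0$, so $|A|$ is invertible in $\S_0$.
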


 The proof is given in Section~\ref{sec:number}. Note that item~\eqref{it:direct} in Theorem~\ref{CommutativeSummary}
 implies that $\A$ is a distributive module.
 In particular, for a prime $p$, not every clonoid from the group $(\Z_p^2,+)$ to a module of coprime order
 is generated by unary functions. So Theorem~\ref{thm:distributive} does not generalize to non-distributive
 modules $\A$.
 
 In general it remains open whether the condition that $\A$ is distributive is necessary so that the number of clonoids
 from $\A$ into any coprime module is finite.
 We do not even know the answer to the following.

\begin{que}
 For a prime $p$, how many clonoids are there from the group $(\Z_p^2,+)$ into a finite module of order coprime to $p$?  
\end{que}

%%% %%%%%%%%%%%%           Section 2:   Preliminaries and Notation          %%%%%%%%%%%%%%%

\section{Preliminaries and Notation} \label{Prelims}

\subsection{General} \label{sec:general}
 We write $\N := \{1,2,\dots\}$ for the set of positive integers and $[n] := \{1,\dots,n\}$ for $n\in\N$.

 Algebras are pairs $\A := (A,F)$ with \emph{universe} $A$ and a set $F$ of finitary operations on $A$
 (the \emph{basic operations} of $\A$).
 
\begin{definition}
  Let $\A,\B$ be algebras with universes $A,B$, respectively.
\begin{enumerate}
\item
 Let $F(A,B) := \bigcup_{k\in\N} B^{A^k}$ denote the clonoid of finitary functions from $\A$ to $\B$.
\item  
 For $F \subseteq F(A,B)$, we denote the smallest clonoid from $\A$ to $\B$ that contains $F$ by $\langle F \rangle_{\A, \B}$
 and call it the clonoid \emph{generated} by $F$.
 For $f\in F(A,B)$, we also write $\langle f\rangle_{\A,\B}$ instead of $\langle \{ f \} \rangle_{\A, \B}$.
\item
 A clonoid $C$ from $\A$ to $\B$ is \emph{finitely generated} if it is generated by some finite set $F \subseteq F(A,B)$
\item
 If $g\in\langle f\rangle_{\A,\B}$ for  $f\in F(A,B)$, we call $g$ an $\A,\B$-\emph{minor} of $f$.
\end{enumerate} 
 If $\A$ and $\B$ are clear from context, we also write $\langle F \rangle$ and $\langle f\rangle$ instead of
 $\langle F \rangle_{\A, \B}$ and $\langle f\rangle_{\A,\B}$.
\end{definition}

\begin{definition}
 For $C \subseteq F(A,B)$ and $k \in \N$, let  $C^{(k)} := C \cap B^{A^k}$ denote the set of $k$-ary functions in $C$.
\end{definition}

\subsection{The lattice of clonoids}

 For algebras $\A$ and $\B$, we denote the set of all clonoids from $\A$ to $\B$ by $\C_{\A,\B}$.
 Then $\C_{\A,\B}$ forms a lattice with meet given by intersection and the join of clonoids $C,D$ given by the clonoid generated
 by the union $\langle C \cup D \rangle_{\A,\B}$.
 This is a bounded lattice with bottom element $\emptyset$ and top element $F(A,B)$.

 The join of clonoids $C,D$ from $\A$ to a module $\B$ is just the pointwise sum of the functions in $C$ and $D$,
\[ C+D := \{f+g \st k\in\N, f\in C^{(k)}, g\in D^{(k)} \}. \]

 We note some straightforward relations for clonoids between algebras, their expansions, quotients and subalgebras.

\begin{lemma} \label{lem:expansions}
 Let $\A,\B$ be algebras with expansions $\A^+,\B^+$, respectively. Then $\C_{\A^+,\B^+}$ is a meet-subsemilattice of $\C_{\A,\B}$.
\end{lemma}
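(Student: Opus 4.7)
The plan is to establish two things: first, that $\C_{\A^+,\B^+}$ is a subset of $\C_{\A,\B}$, and second, that this inclusion respects the meet operation. For the inclusion, I would start with an arbitrary $C\in\C_{\A^+,\B^+}$. By definition this means $C\,\Clo(\A^+)\subseteq C$ and $\Clo(\B^+)\,C\subseteq C$. Since $\A^+$ and $\B^+$ are expansions of $\A$ and $\B$, we have $\Clo(\A)\subseteq\Clo(\A^+)$ and $\Clo(\B)\subseteq\Clo(\B^+)$, hence
\[ C\,\Clo(\A)\subseteq C\,\Clo(\A^+)\subseteq C \quad\text{and}\quad \Clo(\B)\,C\subseteq \Clo(\B^+)\,C\subseteq C, \]
so $C$ is also a clonoid from $\A$ to $\B$. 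This gives $\C_{\A^+,\B^+}\subseteq\C_{\A,\B}$.

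Next I would note that in both $\C_{\A^+,\B^+}$ and $\C_{\A,\B}$ the meet operation is set-theoretic intersection: for any family of clonoids from a given algebra to a given algebra, the intersection is again closed under composition with both clones and hence is itself a clonoid. Therefore for $C,D\in\C_{\A^+,\B^+}$, the set $C\cap D$ lies in $\C_{\A^+,\B^+}$ and agrees with the meet of $C$ and $D$ in $\C_{\A,\B}$. This is exactly the statement that $\C_{\A^+,\B^+}$ is a meet-subsemilattice of $\C_{\A,\B}$.

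There is no serious obstacle here; the only content is in unwinding definitions. I would close by remarking briefly that the analogous assertion for joins fails in general: the join of $C,D\in\C_{\A^+,\B^+}$ computed inside $\C_{\A,\B}$ is the closure of $C\cup D$ under the smaller clones $\Clo(\A)$ and $\Clo(\B)$, which need not be closed under $\Clo(\A^+)$ and $\Clo(\B^+)$. This explains why the lemma is phrased only for meets.
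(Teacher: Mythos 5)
Your proof is correct and follows essentially the same route as the paper: show that every clonoid from $\A^+$ to $\B^+$ is a clonoid from $\A$ to $\B$ because $\Clo(\A)\subseteq\Clo(\A^+)$ and $\Clo(\B)\subseteq\Clo(\B^+)$, observe that the meet is set-theoretic intersection in both lattices, and remark that joins need not agree. You have merely unwound the definitions more explicitly than the paper does.
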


\begin{proof}
 Every clonoid from $\A^+$ to $\B^+$ is also a clonoid from $\A$ to $\B$. Clearly the intersection of clonoids in $\C_{\A^+,\B^+}$ is
 the same as in $\C_{\A,\B}$. Since joins in  $\C_{\A^+,\B^+}$ require closure under $\Clo(\A^+)$ and $\Clo(\B^+)$, they may properly contain
 the joins in $\C_{\A,\B}$. 
\end{proof}

 For any equivalence relation $\alpha$ on $A$, any function $f\colon (A/\alpha)^k \to B$ can be lifted to a function
 $f'\colon A^k \to B$ that is constant on $\alpha$-blocks via
 \[f'(x_1,\ldots,x_k) := f(x_1/\alpha, \ldots, x_k/\alpha).\]
 
\begin{lemma} \label{quotients}
 Let $\A$ and $\B$ be algebras, let $\alpha$ be a congruence on $\A$ and let $\B'$ be a subalgebra of $\B$.  Then
\[ \varphi\colon \C_{\A/\alpha, \B'} \to \C_{\A,\B}, \ C \mapsto \{f' \st f \in C \}, \]
 is a lattice embedding.
\end{lemma}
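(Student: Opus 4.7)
The plan is to verify in turn that $\varphi$ is well-defined (i.e., $\varphi(C)$ really is a clonoid from $\A$ to $\B$), injective, and preserves both meets and joins. The whole argument rests on two translation principles: every $k$-ary term function on the quotient $\A/\alpha$ is of the form $\bar s$ for some $k$-ary term function $s$ on $\A$, where $\bar s(x_1/\alpha,\dots,x_k/\alpha) := s(x_1,\dots,x_k)/\alpha$; and every $n$-ary term function on the subalgebra $\B'$ is the restriction $t|_{B'}$ of some $n$-ary term function $t$ on $\B$. A direct pointwise check then yields the two commutation identities
\[(f(\bar s_1,\dots,\bar s_k))' = f'(s_1,\dots,s_k) \quad\text{and}\quad (t|_{B'}(f_1,\dots,f_n))' = t(f_1',\dots,f_n'),\]
which I would establish first.

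Given these identities, $\varphi(C)$ is closed under right composition with $\Clo(\A)$ and left composition with $\Clo(\B)$, so $\varphi(C) \in \C_{\A,\B}$. Injectivity of $\varphi$ is immediate, since $f$ can be recovered from $f'$ by evaluating on any system of representatives for the $\alpha$-classes. Meet preservation $\varphi(C \cap D) = \varphi(C) \cap \varphi(D)$ then follows at once from the injectivity of $f \mapsto f'$.

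For join preservation I would show the two inclusions separately. The inclusion $\langle \varphi(C) \cup \varphi(D)\rangle_{\A,\B} \subseteq \varphi(C \vee D)$ is free, since $\varphi(C \vee D)$ is a clonoid from $\A$ to $\B$ containing $\varphi(C) \cup \varphi(D)$. For the reverse inclusion I would argue by induction on the construction of an element $h$ of $\langle C \cup D\rangle_{\A/\alpha,\B'}$ from generators via the two clonoid closure operations: the base case puts $h' \in \varphi(C) \cup \varphi(D)$, and the two commutation identities translate each inductive step over $\Clo(\A/\alpha)$ or $\Clo(\B')$ into a closure step over $\Clo(\A)$ or $\Clo(\B)$, keeping $h'$ inside $\langle \varphi(C) \cup \varphi(D)\rangle_{\A,\B}$.

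The main obstacle is conceptual rather than technical: one has to keep straight which algebra's clone is acting on which side of each composition, and in particular that the second commutation identity uses that $f_i'$ takes values in $B'$ so that the restricted term function $t|_{B'}$ may be replaced by the ambient $t$. Once the two commutation identities are recorded, the rest of the verification is a mechanical unwinding of the clonoid closure axioms.
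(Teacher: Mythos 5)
Your proposal is correct and follows essentially the same route as the paper: the two commutation identities you isolate are precisely what the paper verifies pointwise when checking that $\varphi(C)$ is a clonoid, and the remainder (injectivity via constancy on $\alpha$-blocks, meet preservation from injectivity, join preservation by double inclusion) is what the paper summarizes as "immediate."
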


\begin{proof}
 Let $C \in \C_{\A/\alpha, \B'}$. To show $\varphi(C) \in \C_{\A, \B}$, let $f \in C^{(k)}$,
 $s_1, \ldots, s_k \in \text{Clo}(\A)^{(m)}$, and $(x_1,\dots,x_m)\in A^m$.
 Then
 \begin{align*}
  [f'(s_1, \ldots, s_k)](x_1, \ldots,x_m) & = f'(s_1(x_1, \ldots, x_m), \ldots, s_k(x_1, \ldots, x_m)) \\
                                         & = f(s_1(x_1, \ldots,x_m)/\alpha, \ldots, s_k(x_1, \ldots, x_m)/\alpha)  \\
                                         & = f(s_1(x_1/\alpha,\ldots,x_m/\alpha), \ldots, s_k(x_1/\alpha, \ldots, x_m/\alpha)) \\
   & = [f(s_1, \ldots, s_k)]' (x_1,\ldots,x_m) \in \varphi(C).
\end{align*}
 Also for $f_1, \ldots, f_k \in C^{(m)}$, $t \in \Clo(\B)^{(k)}$ and $(x_1,\dots,x_m)\in A^m$, 
 \begin{align*}
   [t(f_1', \ldots, f_k')](x_1, \ldots, x_m) & = t(f_1'(x_1, \ldots, x_m),\ldots, f_k'(x_1, \ldots, x_m)) \\
                                           & = t(f_1(x_1/\alpha, \ldots, x_m/\alpha), \ldots, f_k(x_1/\alpha, \ldots, x_m/\alpha)) \\
   & = [t(f_1,\ldots,f_k)]'(x_1, \ldots, x_m) \in \varphi(C).
\end{align*} 
 Hence $\varphi(C)$ is a clonoid from $\A$ to $\B$. 

 That $\varphi$ is injective and preserves meets and joins is immediate.
\end{proof}

\begin{lemma} \label{lem:subalgebras}
 Let  $\A$ and $\B$ be algebras and let $\A'$ be a subalgebra of $\A$.
 Then restriction of functions from $A$ to $A'$ yields a lattice homomorphism from $\C_{\A,\B}$ to $\C_{\A',\B}$.
\end{lemma}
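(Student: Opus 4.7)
The plan is to define $\varphi : \C_{\A,\B} \to \C_{\A',\B}$ by $\varphi(C) := \{ f|_{(A')^k} : k \in \N,\ f \in C^{(k)} \}$, and then verify in order: (i) $\varphi(C)$ is a clonoid from $\A'$ to $\B$; (ii) $\varphi$ preserves joins; (iii) $\varphi$ preserves meets.

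For (i), the key observation is that each $m$-ary term function $s$ of $\A'$ is the restriction to $(A')^m$ of the identically-built term function $\tilde s \in \Clo(\A)^{(m)}$, and because $A'$ is closed under all operations of $\A$, restriction commutes with term composition. Thus for $f \in C^{(k)}$ and $s_1,\dots,s_k \in \Clo(\A')^{(m)}$, the composition $(f|_{(A')^k})(s_1,\dots,s_k)$ agrees on $(A')^m$ with $f(\tilde s_1,\dots,\tilde s_k) \in C$, and so lies in $\varphi(C)$. Closure under left composition with $\Clo(\B)$ is immediate from $t(f_1,\dots,f_n)|_{(A')^k} = t(f_1|_{(A')^k},\dots,f_n|_{(A')^k})$ for $t \in \Clo(\B)^{(n)}$.

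For (ii), monotonicity of $\varphi$ together with (i) yields $\varphi(C) \vee \varphi(D) \subseteq \varphi(C \vee D)$ immediately, since $\varphi(C \vee D)$ is a clonoid containing both $\varphi(C)$ and $\varphi(D)$. For the reverse inclusion, I would let $E$ be the set of all $h \in F(A,B)$ whose restriction to the appropriate power of $A'$ belongs to $\varphi(C) \vee \varphi(D)$. The same restriction-commutes-with-composition arguments from (i) show that $E$ is itself a clonoid from $\A$ to $\B$, and clearly $E \supseteq C \cup D$; hence $E \supseteq C \vee D$, so $\varphi(C \vee D) \subseteq \varphi(C) \vee \varphi(D)$.

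For (iii), the inclusion $\varphi(C \cap D) \subseteq \varphi(C) \cap \varphi(D)$ is immediate from the definition. The reverse inclusion is the step I expect to be the main obstacle: given $g \in \varphi(C) \cap \varphi(D)$ we have witnesses $f_C \in C^{(k)}$ and $f_D \in D^{(k)}$ with $f_C|_{(A')^k} = g = f_D|_{(A')^k}$, but $f_C$ and $f_D$ may disagree arbitrarily on $A^k \setminus (A')^k$, and there is no obvious canonical way to merge them into a common element of $C \cap D$ restricting to $g$. To make this step go through, I would trace how $f_C$ and $f_D$ arise from generators of $C$ and $D$ under $\Clo(\A)$- and $\Clo(\B)$-composition, and attempt to construct a single function in $C \cap D$ matching $g$ on $(A')^k$. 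This is the delicate part of the proof and the place where I anticipate the argument will require the most careful work.
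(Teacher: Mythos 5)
Your arguments for (i) and (ii) are correct, and in fact they establish more than the paper's one-line proof actually addresses (which only covers well-definedness). However, the difficulty you flagged in (iii) is not merely a delicate step to be filled with more careful work: it is a genuine obstruction, and the map $\varphi$ does not preserve meets in general. The key difference from Lemma~\ref{quotients} is that the lifting map there is \emph{injective} on functions, which is what makes its meet-preservation argument go through; by contrast many functions on $A^k$ restrict to the same function on $(A')^k$, so there need not be any common witness in $C\cap D$.

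Concretely, take $A=\{0,1,2\}$, $A'=\{0,1\}$, $B=\{0,1\}$, all with empty signature, so both clones consist only of projections and clonoids are minor-closed classes. Let $f\colon A\to B$ be the indicator function of $\{2\}$ and set $C:=\langle f\rangle_{\A,\B}$, which consists of the functions $(x_1,\dots,x_k)\mapsto f(x_i)$. Let $c$ be the unary constant-zero function and $D:=\langle c\rangle_{\A,\B}$, the clonoid of all constant-zero functions. No element of $C$ is identically zero, so $C\cap D=\emptyset$ and hence $\varphi(C\cap D)=\emptyset$; but $f|_{A'}$ is identically zero, so $\varphi(C)=\varphi(D)$ is the nonempty clonoid of constant-zero functions from $\A'$ to $\B$, giving $\varphi(C)\cap\varphi(D)\neq\varphi(C\cap D)$. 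So $\varphi$ preserves joins, order, $\emptyset$, and $F(A,B)$, but not meets; only the join-semilattice part of your proposal can be completed, and the meet-preservation you were asked to verify is in fact false. The paper's terse justification (that term functions of $\A'$ are restrictions of term functions of $\A$) establishes only that $\varphi$ is well-defined and does not engage with this point.
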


\begin{proof}
  Straightforward since term functions of $\A'$ are restrictions of term functions of $\A$. 
\end{proof} 

 We give one easy example of how clonoids arise in the description of expansions of algebras.
 Let $\A$ be a module with a submodule $B$. For $f\colon A^k\to A$ and $g\colon (A/B)^k\to B$ write
\[ f+g'\colon A^k\to A,\ x \mapsto f(x)+g(x+B^k). \]
 For a clonoid $C$ from $\A/B$ to $\B$ let 
\[ \Clo(\A)+C := \{f+g' \st k\in\N, f\in\Clo(\A)^{(k)}, g\in C^{(k)} \}. \]
 
\begin{lemma} \label{lem:ABexpansion}
  Let $\A$ be a module with a submodule $\B$,
 let $\C$ be the lattice of clonoids from $\A/B$ to $\B$ that contain all additive functions,
 and let $\mathcal{D}$ be the lattice of clones on $A$ that contain $\Clo(\A)$.
 Then 
\[ \varphi\colon \C\to \mathcal{D},\ C \mapsto \Clo(\A)+ C, \]
 is a lattice embedding.
\end{lemma}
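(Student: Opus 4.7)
The plan is to verify three things in turn: (a) $\Clo(\A)+C$ is a clone containing $\Clo(\A)$ for each $C\in\C$; (b) $\varphi$ is injective; (c) $\varphi$ preserves meets and joins. The key structural facts I will use repeatedly are that every term of the $\R$-module $\A$ is $\R$-linear, so its restriction to $B^n$ is a term of $\B$ and it descends to a term of $\A/B$; and that every clonoid $C$ in $\C$ is closed under pointwise addition (because $+\in\Clo(\B)$) and by hypothesis contains all additive functions $A/B\to B$.

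For (a), projections lie in $\Clo(\A)\subseteq\Clo(\A)+C$ (take the zero function, which is additive, from $C$). For closure under composition, let $F=f+g'$ be $n$-ary with $f\in\Clo(\A)$, $g\in C$, and $F_i=f_i+g_i'$ be $k$-ary with $f_i\in\Clo(\A)$, $g_i\in C$. Because each $g_i'(x)$ lies in $B$, we have $F_i(x)+B=f_i(x)+B$; denote this common coset by $\bar f_i(\bar x)$, where $\bar x:=x+B^k$ and $\bar f_i$ is the term of $\A/B$ induced by $f_i$. Using $\R$-linearity of $f$ to split $f(F_1(x),\dots,F_n(x))$ as $f(f_1,\dots,f_n)(x)+f(g_1',\dots,g_n')(x)$ gives
\[
 F(F_1,\dots,F_n) \;=\; f(f_1,\dots,f_n) \;+\; \bigl(f(g_1,\dots,g_n)+g(\bar f_1,\dots,\bar f_n)\bigr)'.
\]
The first summand lies in $\Clo(\A)$; inside the parenthesis, $g(\bar f_1,\dots,\bar f_n)\in C$ by right-composition with $\Clo(\A/B)$, while $f(g_1,\dots,g_n)\in C$ because $f$ restricted to $B^n$ is a term of $\B$; their sum lies in $C$ by closure under $+$.

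For (b), suppose $\Clo(\A)+C_1=\Clo(\A)+C_2$ and let $g\in C_1^{(k)}$. Then $g'=f+h'$ for some $f\in\Clo(\A)^{(k)}$ and $h\in C_2^{(k)}$. Since $g'$ and $h'$ are valued in $B$ and constant on $B^k$-cosets, so is $f$; hence $f$ descends to a map $\bar f\colon (A/B)^k\to B$ which is additive (inherited from $\R$-linearity of $f$). By hypothesis $\bar f\in C_2$, so $g=\bar f+h\in C_2$. Symmetry yields $C_1=C_2$.

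For (c), the inclusion $\varphi(C_1\cap C_2)\subseteq\varphi(C_1)\cap\varphi(C_2)$ is immediate. Conversely, if $F\in\varphi(C_1)\cap\varphi(C_2)$ is written as $f_1+g_1'=f_2+g_2'$, then the argument of (b) applied to $f_1-f_2$ shows that $\bar f_1-\bar f_2$ is additive and hence in $C_1\cap C_2$, so $g_1=g_2+(\bar f_2-\bar f_1)$ forces $g_1,g_2\in C_1\cap C_2$. For joins, clonoids between modules are closed under $+$, so the join in $\C$ is the pointwise sum $C_1+C_2$, and $\varphi(C_1+C_2)=\Clo(\A)+(C_1+C_2)$ coincides with the clone-join $\varphi(C_1)\vee\varphi(C_2)$ in $\mathcal{D}$ because every clone in $\mathcal{D}$ contains $+\in\Clo(\A)$, so is closed under componentwise addition. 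The main obstacle is (a): one must combine the $\B$-term behavior of $\Clo(\A)$ on the submodule $B$ with the $\A/B$-term behavior on the quotient, using additivity of module terms to split the composite cleanly into a $\Clo(\A)$-piece and a $C$-piece.
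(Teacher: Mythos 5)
Your proof is correct and follows essentially the same route as the paper: the same decomposition $(u+v')(f_1+g_1',\dots,f_n+g_n') = u(f_1,\dots,f_n) + [u(g_1,\dots,g_n)+v(\bar f_1,\dots,\bar f_n)]'$ drives closure under composition, and injectivity is proved in the same way by observing that the difference of two liftings is an additive term that descends to an additive function on $\A/B$ lying in the clonoid. You additionally flesh out the meet/join preservation that the paper dismisses as ``straightforward,'' and your join argument (identifying both joins with $\Clo(\A)+(C_1+C_2)$ via closure under pointwise $+$) is a clean way to do it; the only cosmetic slip is writing $\bar f_1 - \bar f_2$ where you mean $\overline{f_1-f_2}$, since $f_1,f_2$ individually need not be $B$-valued.
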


\begin{proof}
 Let $C$ be a clonoid from $\A/B$ to $\B$. Since $C$ contains the constant $0$ function,
 $\Clo(\A)\subseteq\Clo(\A)+C$ and the latter contains all projections on $A$.
 To see that $\Clo(\A)+C$ is closed under composition, let $u\in\Clo(\A),v\in C$ be $n$-ary and let
 $f_1,\dots,f_n\in\Clo(\A),g_1,\dots,g_n\in C$ be $k$-ary. Then
\begin{align*}
  & \left[u+v'\right](f_1+g'_1,\dots,f_n+g'_n)  \\
  = & u(f_1+g'_1,\dots,f_n+g'_n) + v'(f_1+g'_1,\dots,f_n+g'_n) \\
  = & u(f_1,\dots,f_n) + [u(g_1,\dots,g_n)]'+ [v(f_1,\dots,f_n)]' 
\end{align*}
since $u$ is linear on $\A$ and $v'$ is constant on $B$-cosets.
 Note that $u(f_1,\dots,f_n)\in\Clo(\A)$ and $u(g_1,\dots,g_n)+v(f_1,\dots,f_n)\in C$ by the respective
 closure properties.
 Hence $\left[u+v'\right](f_1+g'_1,\dots,f_n+g'_n)$ is in $\Clo(\A)+C$ and the latter is a clone.

 To see that $\varphi$ is injective, let $C_1,C_2\in\mathcal{C}$ such that $\Clo(\A)+C_1 = \Clo(\A)+C_2$.
 Then for every $f_1\in C_1$ there exist $t\in\Clo(\A)$ and $f_2\in C_2$ such that $f'_1 = t+f'_2$.
 Since $f'_1-f'_2 = t\in\Clo(\A)$ is additive, also the induced function $f_1-f_2$ from $\A/B$ to $B$ is additive.
 By the assumption that $C_2$ contains all additive functions as well as $f_2$, we obtain $(f_1-f_2) + f_2 = f_1 \in C_2$.
 Hence $C_1 \subseteq C_2$. The converse inclusion follows by symmetry. Thus $\varphi$ is injective.

 That $\varphi$ preserves meets and joins is straightforward.
\end{proof}

 We note that all the expansions $\A_C := (A, \Clo(\A)+ C)$ in Lemma~\ref{lem:ABexpansion}
 are $2$-nilpotent with respect to the term condition commutator~\cite[Chapter 7]{FM:CTC}.
 In particular the congruence modulo $B$ is central in $\A_C$.

\subsection{Relational description of clonoids}  \label{sec:Galois}
 Pippenger~\cite{Pi:GTMF} extended the Galois theory between clones and relations to the setting of clonoids between sets
 and pairs of relations. Couceiro and Foldes~\cite{CF:FCRC} further expanded this to our setting of clonoids between algebras.

\begin{definition}
 Let $A$ and $B$ be  nonempty sets and let $n \in \N$.  For $R \subseteq A^n, S \subseteq B^n$, let
\[ \Pol(R,S) := \bigcup_{k\in\N} \{f\colon A^k \to B \mid f(R,\dots,R) \subseteq S \} \]
 denote the set of \emph{polymorphisms} of the relational pair $(R,S)$. 
\end{definition}
 
\begin{theorem}\cite[cf.~Theorem 2]{CF:FCRC} Let $\A$ and $\B$ be algebras with $|A|$ finite.
 For $C \subseteq \bigcup_{n \in \N} B^{A^n}$ the following are equivalent:
\begin{enumerate}
\item $C$ is a clonoid from $\A$ to $\B$.
\item

 There exist subalgebras $R_i,S_i$ of $\A^{m_i}, \B^{m_i}$, respectively, for $m_i\in\N$ and $i$ in a set $I$ such that
  $C = \bigcap_{i \in I} \Pol(R_i,S_i)$.

 \end{enumerate}
\end{theorem}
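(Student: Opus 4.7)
The plan is to handle (2)$\Rightarrow$(1) directly and prove (1)$\Rightarrow$(2) by a standard separation argument. For (2)$\Rightarrow$(1), I would verify that each $\Pol(R,S)$ with $R$ a subalgebra of $\A^m$ and $S$ a subalgebra of $\B^m$ is already a clonoid. If $f \in \Pol(R,S)^{(k)}$ and $s_1,\dots,s_k \in \Clo(\A)^{(l)}$, then applying each $s_j$ componentwise to $l$ tuples from $R$ stays in $R$ because $R$ is closed under the basic operations of $\A^m$, so $f(s_1,\dots,s_k)$ still lands in $S$; symmetrically, for $t \in \Clo(\B)^{(n)}$ and $f_1,\dots,f_n \in \Pol(R,S)^{(k)}$, $t(f_1,\dots,f_n)$ sends $R^k$ into $t(S,\dots,S) \subseteq S$. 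Since arbitrary intersections of clonoids are clonoids, this direction follows.

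For (1)$\Rightarrow$(2), the strategy is to separate each $g \in F(A,B) \setminus C$ from $C$ by a single polymorphism pair $(R_g,S_g)$, so that $C = \bigcap_{g \notin C} \Pol(R_g,S_g)$. Assume $g$ is $k$-ary. Using finiteness of $A$, enumerate $A^k = \{a_1,\dots,a_m\}$ with $m = |A|^k$, define $e_i := (a_1(i),\dots,a_m(i)) \in A^m$ for each $i \in [k]$, let $R_g$ be the subalgebra of $\A^m$ generated by $\{e_1,\dots,e_k\}$, and set $S_g := \{(h(a_1),\dots,h(a_m)) : h \in C^{(k)}\}$. Left-composition closure of $C$ with $\Clo(\B)$ shows that $S_g$ is a subalgebra of $\B^m$. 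Applying $g$ componentwise to $(e_1,\dots,e_k)$ produces the graph tuple $(g(a_1),\dots,g(a_m))$; since distinct $k$-ary functions have distinct graph tuples, this lies in $S_g$ iff $g \in C^{(k)}$, giving the required separation.

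The main step to verify, and the one most prone to subtlety, is that every $f \in C$ of any arity $l$ (possibly $l \neq k$) lies in $\Pol(R_g,S_g)$. Any $r_j \in R_g$ is of the form $r_j = s_j(e_1,\dots,e_k)$ for some $k$-ary term function $s_j$ of $\A$ by choice of generators, so the $i$-th coordinate of $r_j$ equals $s_j(a_i)$. The $i$-th coordinate of $f(r_1,\dots,r_l)$ then equals $f(s_1(a_i),\dots,s_l(a_i)) = h(a_i)$, where $h := f(s_1,\dots,s_l) \in C^{(k)}$ by right-composition closure of $C$ under $\Clo(\A)$. Hence $f(r_1,\dots,r_l)$ is precisely the graph tuple of $h$ and lies in $S_g$. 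The arity mixing is resolved exactly by invoking $C\,\Clo(\A) \subseteq C$ to collapse an $l$-ary function into a $k$-ary one whose graph populates $S_g$. Intersecting $\Pol(R_g,S_g)$ over all $g \notin C$ then yields the claimed relational description.
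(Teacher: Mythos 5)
The paper does not actually supply a proof of this theorem; it is cited directly from Couceiro--Foldes \cite[Theorem 2]{CF:FCRC}, so there is no internal proof to compare against. Evaluated on its own, your argument is the standard Galois-connection proof (the clonoid analogue of the Geiger/Bodnarchuk--Kaluzhnin--Kotov--Romov theorem for clones) and it is correct. The direction $(2)\Rightarrow(1)$ is handled properly: $R$ being a subuniverse of $\A^{m}$ makes $\Pol(R,S)$ closed under right-composition with $\Clo(\A)$, $S$ being a subuniverse of $\B^{m}$ gives closure under left-composition with $\Clo(\B)$, and intersections of clonoids are clonoids. For $(1)\Rightarrow(2)$, the separating pair $(R_g,S_g)$ is constructed exactly as one would want: $R_g$ is the subalgebra of $\A^{|A|^k}$ generated by the ``columns'' $e_1,\dots,e_k$ of the enumeration of $A^k$ (so every element of $R_g$ has the form $s(e_1,\dots,e_k)$ for a $k$-ary term $s$), and $S_g$ is the set of graph tuples of functions in $C^{(k)}$, which is a subuniverse of $\B^{|A|^k}$ by left-closure of $C$. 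The crucial step --- reducing an arbitrary $\ell$-ary $f\in C$ evaluated on elements $s_1(e),\dots,s_\ell(e)$ of $R_g$ to the graph tuple of the $k$-ary $\A,\B$-minor $f(s_1,\dots,s_\ell)\in C^{(k)}$ --- is exactly what makes the argument work, and you identify it explicitly. Together with the observation that distinct $k$-ary functions have distinct graph tuples (so $g$'s graph tuple is excluded from $S_g$ precisely because $g\notin C^{(k)}$), this gives $C=\bigcap_{g\notin C}\Pol(R_g,S_g)$.

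One small point worth flagging: when $C=\emptyset$ (which is a clonoid under the paper's definition), $S_g$ is empty, so the argument implicitly requires the empty set to count as a subalgebra of $\B^m$. If $\B$ has constants and one insists subalgebras be nonempty, the empty clonoid is not representable as $\bigcap_i\Pol(R_i,S_i)$ at all; this is a corner case inherited from the cited theorem and not a defect specific to your proof, but it is worth noticing. Aside from that, the proposal is complete and matches the expected route.
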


For example, for modules $\A$ and $\B$, the additive functions from $\A$ to $\B$ form a clonoid, 
since $f \colon A^k \rightarrow B$ is additive if and only if $f \in \Pol(R,S)$ where 
\[ R := \{ (x,y,z) \in A^3 \st x+y =z \} \le \A^3, \]
and
\[ S := \{(x,y,z) \in B^3 \st x+y = z \} \le \B^3. \]

 In \cite{AM:FGEC} Aichinger and the first author of this paper showed that in certain settings
 the number of relational pairs needed to determine a clonoid is finite.
 
 A finite algebra $\B$ has \emph{few subpowers} if there exists a polynomial $p$ such that for every $n\in\N$
 the number of subalgebras of $\B^n$ is at most $2^{p(n)}$.
 All finite algebras with (quasi)group or lattice operations have few subpowers. Hence the following holds in particular for
 $\B$ a group or module.

\begin{theorem}\cite[Theorem 5.3]{AM:FGEC} \label{thm:cube}
 Let $\A,\B$ be finite algebras such that $\B$ has few subpowers.
\begin{enumerate}  
\item Then the lattice of clonoids from $\A$ to $\B$ satisfies the descending chain condition.
\item Every clonoid from $\A$ to $\B$ is finitely related (i.e.
  the polymorphism clonoid of a single relational pair). 
\item The number of clonoids from $\A$ to $\B$ is finite or countably infinite.
\end{enumerate}  
\end{theorem}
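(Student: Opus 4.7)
The plan is to use the Couceiro--Foldes Galois correspondence stated just above in conjunction with the structural strength of the few subpowers hypothesis on $\B$. Few subpowers is equivalent to $\B$ having a cube term, which yields polynomial bounds on the size of generating sets of subpowers of $\B$ together with a ``compact representation'' theorem: subuniverses of $\B^n$ are determined by projection data of arity bounded by a constant depending only on $\B$.

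For part (1), the DCC, consider a descending chain $C_1 \supseteq C_2 \supseteq \cdots$ of clonoids from $\A$ to $\B$. For each fixed $k\in\N$, the restrictions $C_i^{(k)}$ form a descending chain of subalgebras of $\B^{A^k}$, and this chain stabilizes since $\B^{A^k}$ is finite. The cube-term machinery lets us bound the arity at which stabilization in all arities has occurred: there exists a $d$ depending only on $\B$ such that once $(C_i^{(k)})_{k\leq d}$ has stabilized, the entire clonoid $C_i$ has stabilized, via the compact-representation theorem applied fiberwise to each $\B^{A^k}$. This bounds the length of any strictly descending chain.

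For part (2), by the Galois theorem any clonoid $C$ is an intersection $\bigcap_{i\in I}\Pol(R_i,S_i)$. One first verifies directly that
\[\Pol(R_1,S_1)\cap\Pol(R_2,S_2)\;=\;\Pol(R_1\times R_2,\,S_1\times S_2),\]
where the direct product places the pairs in parallel as subalgebras of $\A^{m_1+m_2}$ and $\B^{m_1+m_2}$. By part (1), the net of finite subintersections $T_F:=\bigcap_{i\in F}\Pol(R_i,S_i)$ ordered by inclusion of the finite set $F\subseteq I$ attains its minimum at some finite $F_0\subseteq I$, and this minimum must equal $C$ (else one could adjoin any $i$ with $T_{F_0}\not\subseteq\Pol(R_i,S_i)$ to strictly decrease $T_{F_0}$, contradicting minimality). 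Iterating the direct-product identity over the finitely many pairs indexed by $F_0$ produces a single defining pair $(R,S)$ with $C=\Pol(R,S)$. Part (3) is then immediate: for each arity $m\in\N$ there are only finitely many pairs $(R,S)$ with $R\leq\A^m$ and $S\leq\B^m$, so the set of all such pairs is countable, and by (2) the number of clonoids is at most countably infinite.

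The main obstacle will be part (1). The Galois correspondence on its own yields no chain condition; the DCC relies essentially on the few subpowers structure of $\B$, specifically on the compact-representation/bounded-signature theorem for subpowers, which is what prevents the restricted chains $(C_i^{(k)})_i$ from requiring unboundedly many arities before all of them simultaneously stabilize. Once DCC is secured, (2) and (3) are fairly formal consequences via the direct-product identity for polymorphism pairs and a straightforward counting argument.
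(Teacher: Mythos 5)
Your reductions of (2) to (1) and of (3) to (2) are correct, but the argument for (1) has a fatal gap. You claim there is a fixed $d$ such that, for any descending chain $(C_i)_i$ of clonoids, once the sequences $(C_i^{(k)})_i$ have stabilized for all $k\le d$ the whole chain has stabilized. Whatever $d$ depends on (you say only $\B$; allowing it to depend on $\A$ as well does not help), this would imply that every clonoid $C$ from $\A$ to $\B$ is determined by $(C^{(k)})_{k\le d}$: if $C,C'$ are clonoids with $C^{(k)}=C'^{(k)}$ for $k\le d$, then in the descending chain $C\supseteq C\cap C'\supseteq C\cap C'\supseteq\cdots$ the arity-$\le d$ data is constant, since $(C\cap C')^{(k)}=C^{(k)}\cap C'^{(k)}=C^{(k)}$ for $k\le d$, so your claim forces $C=C\cap C'$ and, by symmetry, $C=C'$. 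For finite $A,B$ there are only finitely many possible tuples $(C^{(k)})_{k\le d}$, so there would be only finitely many clonoids from $\A$ to $\B$. That contradicts results quoted in this very paper: Sparks gives countably infinitely many clonoids from any set $A$ with $|A|>1$ to $(\Z_2,+)$, and Kreinecker gives countably infinitely many from $(\Z_p,+)$ to itself; in both cases $\B$ has a Mal'cev term and hence few subpowers. The compact-representation theorem yields small \emph{generating sets} for subuniverses of $\B^n$; it does not say that such a subuniverse, let alone a whole clonoid, is pinned down by bounded-arity data.

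In fact the logical dependency should run the other way: (2) is proved directly from few subpowers, and (1) is then a consequence. Put $C_\infty:=\bigcap_i C_i$. By (2), $C_\infty=\Pol(R,S)$ for some $R\le\A^m$, $S\le\B^m$. A $k$-ary $f$ fails to preserve $(R,S)$ precisely when some $m\times k$ matrix over $A$ with columns in $R$ is mapped row-wise by $f$ outside $S$; such a matrix has at most $|A|^m$ distinct columns, so identifying equal columns yields an $\A,\B$-minor of $f$ of arity $\le|A|^m$ that already fails $(R,S)$. Hence once $C_i^{(k)}=C_\infty^{(k)}$ for all $k\le|A|^m$ --- which happens at a finite stage since each $\B^{A^k}$ is finite --- we get $C_i\subseteq\Pol(R,S)=C_\infty$, i.e.\ $C_i=C_\infty$. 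Notice that the stabilization arity $|A|^m$ depends on the defining pair of $\bigcap_i C_i$, hence on the chain; this is exactly why no uniform $d$ of the kind you posit can exist, and why your (otherwise correct) derivation of (2) from (1) cannot serve as the opening move.
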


We note that between finite algebras there are only finitely many clonoids if and only if they are all generated by functions
of some bounded arity.

\begin{lemma} \label{lem:CABfinite}
 For finite algebras $\A,\B$ the following are equivalent:
\begin{enumerate}
\item
 The number of clonoids from $\A$ to $\B$ is finite.
\item
 There exists some $n\in\N$ such that every clonoid from $\A$ to $\B$ is generated by $n$-ary functions.
\item
 There exists some $n\in\N$ such that for all $k\in\N$ every function $f\colon A^k\to B$ is generated by
 its $n$-ary $\A,\B$-minors.
\end{enumerate}  
\end{lemma}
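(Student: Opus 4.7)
The plan is to establish the three equivalences by combining a counting argument with an ascending chain argument. For $(2)\Leftrightarrow(3)$, I would observe that condition~(3) says precisely that for every $f\in F(A,B)$ the principal clonoid $\langle f\rangle_{\A,\B}$ is generated by its $n$-ary part $\langle f\rangle^{(n)}$, which is the statement of~(2) applied to $\langle f\rangle_{\A,\B}$. Conversely, if~(3) holds and $C$ is any clonoid from $\A$ to $\B$, then for every $f\in C$ we have $f\in\langle \langle f\rangle^{(n)}\rangle\subseteq\langle C^{(n)}\rangle_{\A,\B}$, so $C=\langle C^{(n)}\rangle_{\A,\B}$, yielding~(2).

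For $(2)\Rightarrow(1)$, I would use that $B^{A^n}$ is a finite set whenever $A,B$ are finite, hence has only finitely many subsets. Since by~(2) every clonoid $C$ equals $\langle C^{(n)}\rangle_{\A,\B}$ and is therefore determined by the subset $C^{(n)}\subseteq B^{A^n}$, there can be only finitely many clonoids from $\A$ to $\B$.

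For $(1)\Rightarrow(2)$, the strategy is to exploit the ascending chain condition forced by finiteness. Fix a clonoid $C$ and consider the chain of clonoids $D_k:=\langle C^{(k)}\rangle_{\A,\B}$ for $k\in\N$. Each $D_k$ is contained in $C$, and each $f\in C$ has some fixed arity $k$ and so lies in $C^{(k)}\subseteq D_k$; hence $\bigcup_{k\in\N} D_k = C$. By hypothesis~(1), the chain $(D_k)_{k\in\N}$ takes only finitely many distinct values and must therefore stabilize at some index $n_C$, giving $C = D_{n_C} = \langle C^{(n_C)}\rangle_{\A,\B}$. Setting $n := \max_{C} n_C$ over the finitely many clonoids from $\A$ to $\B$ furnishes the uniform bound asserted in~(2).

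The lemma is essentially a bookkeeping result, so I do not anticipate any real obstacle. The only point worth noting is the identity $\bigcup_k \langle C^{(k)}\rangle_{\A,\B} = C$ that drives the stabilization step in $(1)\Rightarrow(2)$; once one records that each element of $C$ has a definite arity, the rest is routine.
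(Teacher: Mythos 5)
Your argument is correct and fills in the details of a proof the paper dismisses as ``Straightforward.'' The equivalence $(2)\Leftrightarrow(3)$, the counting argument for $(2)\Rightarrow(1)$, and the stabilization argument for $(1)\Rightarrow(2)$ are all sound. One small point you leave implicit in $(1)\Rightarrow(2)$: you need the sequence $D_k=\langle C^{(k)}\rangle_{\A,\B}$ to actually be an \emph{ascending} chain before ``finitely many distinct values'' yields stabilization. This does hold, since for $f\in C^{(k)}$ one can pad with a dummy variable using a projection of $\Clo(\A)$ to get a $(k+1)$-ary function in $C$ and then recover $f$ by identifying the last two variables, so $C^{(k)}\subseteq\langle C^{(k+1)}\rangle$ and hence $D_k\subseteq D_{k+1}$; it would be worth stating this explicitly. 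With that observation recorded, the proof is complete and matches the level of routine bookkeeping the lemma calls for.
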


\begin{proof}
 Straightforward. 
\end{proof}

\subsection{Abelian Mal'cev algebras} \label{sec:Malcev}
 Algebras $(A,F_1)$ and $(A,F_2)$ on the same universe are \emph{polynomially equivalent} if they have the same clone
 of polynomial functions.

 A ternary operation $m$ on a set $A$ is \emph{Mal'cev} if 
\[ m(x,y,y) = x = m(y,y,x) \text{ for all } x,y\in A. \]
 An algebra $\A$ is \emph{Mal'cev} if it has a term operation which is Mal'cev.

 Every expansion of a group $(A,+,-,0)$ has a Mal'cev term operation $m(x,y,z) = x-y+z$.
 A Mal'cev algebra is abelian (with respect to the term condition commutator) if and only if it is polynomially equivalent to
 a module~\cite{FM:CTC}. We will need the following explicit characterization of abelian Mal'cev algebras in
 Section~\ref{sec:abelian}.

\begin{lemma} \cite[Chapters 5, 9]{FM:CTC} \label{lem:abelian}
 Let $\A$ be an abelian algebra with Mal'cev term operation $m$. Then
\begin{enumerate}
\item
 the set of rank-$1$-commutator terms $R_\A := \{ r\in \Clo(\A)^{(2)} \st r(z,z)=z\ \forall z\in A \}$ of $\A$ forms a ring
 $\R_\A$ under
\begin{align*}
 & r(x,z) + s(x,z) := m(r(x,z),z,s(x,z)), \\
 &-r(x,z) := m(z,r(x,z),z) \\ 
 & r(x,z) \cdot s(x,z) := r(s(x,z),z)   
\end{align*}
 for $r,s\in R$ and with neutral elements $z,x$ for $+,\cdot$, respectively. 
\item \label{it:poly}
 Fixing any $0\in A$ as neutral element, $A$ forms an $\R_\A$-module $\A_0$   
 under
\begin{align*}
 & a +_0 b := m(a,0,b), \quad -_0 a := m(0,a,0), \\ 
 & r*_0 a := r(a,0)
\end{align*}
 for $a,b\in A, r\in R_\A$.

 The expansion of $\A$ with the constant operation $0$ is term equivalent to the expansion of $\A_0$ with $\Clo(\A)^{(1)}$.
 In particular, $\A$ and $\A_0$ are polynomially equivalent.
\item \label{it:iso}
 For $0,z\in A$,
\[ h\colon A\to A,\ x \mapsto m(x,0,z), \]
 is an $\R_\A$-module isomorphism from $\A_0$ to $\A_z$. 

\end{enumerate}   
\end{lemma}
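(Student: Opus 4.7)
The plan is to use abelianness (the term condition $[1,1]=0$) together with the Mal'cev identities $m(x,y,y)=m(y,y,x)=x$ as the two workhorses. Throughout, I would freely use that in an abelian Mal'cev algebra every term $t(\bar x)$ behaves like a ``linear combination'' in the sense that for any $a,b\in A$, the difference $m(t(\bar x,a),t(\bar x,b),t(\bar y,b))$ depends only on $\bar x-\bar y$ (via the term condition), which is the reason rank-1 terms can be added and composed coherently.

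For part~(1), I would first check that the three operations defined on $R_\A$ really produce rank-1 terms: e.g.\ $(r+s)(z,z)=m(r(z,z),z,s(z,z))=m(z,z,z)=z$, and similarly for $-$ and $\cdot$. Then I would verify the ring axioms using abelianness. Commutativity and associativity of $+$ follow by showing that the difference of the two sides is a term that sends $(z,z)$ to $z$ while being independent of $x$; using the term condition applied to the Mal'cev term, such a term must be constantly $z$. The distributive law $r\cdot(s+t)=r\cdot s+r\cdot t$ uses abelianness of $r$ (as a binary polynomial) to pull the Mal'cev term through. The neutral elements are immediate: $m(r(x,z),z,z)=r(x,z)$ by the Mal'cev identity and $r(x,z)=r(x,z)\cdot x$ since composition with the projection $x$ is the identity.

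For part~(2), having a ring, I would fix $0\in A$ and show that $+_0$ is an abelian group with identity $0$ and inversion $-_0$: the group laws collapse to the ring laws for $+$ on $R_\A$ after substituting constants, because every element $a\in A$ can be recovered as the unary polynomial $x\mapsto a$ evaluated at $0$, and rank-1 terms act on these unaries coherently. The scalar action $r*_0 a := r(a,0)$ gives a module because $(r\cdot s)*_0 a = r(s(a,0),0) = r*_0(s*_0 a)$ uses that $s(a,0)$ is in the image and $r$ applied to it at second argument $0$ coincides with ring multiplication. For the polynomial equivalence, I would observe that every unary polynomial of $\A$ with parameters is of the form $r(x,\bar c)$ for some term $r$ and tuple $\bar c$; after picking $0$ as a distinguished constant and using the Mal'cev term, this can be rewritten as $r(x,0)+_0 d$ for some $d\in A$, which is precisely a unary polynomial of the module $\A_0$ (together with the original unary term operations of $\A$).

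For part~(3), the map $h(x)=m(x,0,z)$ is a bijection with inverse $x\mapsto m(x,z,0)$ by the Mal'cev identities. To show it preserves $+$, one computes $h(a+_0 b)=m(m(a,0,b),0,z)$ and $h(a)+_z h(b)=m(m(a,0,z),z,m(b,0,z))$, and the equality of these two expressions is exactly an instance of the term condition applied to $m$. Preservation of the scalar action is analogous. The main obstacle I foresee is the bookkeeping in part~(1): one must show that the ring axioms are satisfied as identities in the free abelian Mal'cev algebra, which requires systematically reducing each identity to a statement about the Mal'cev term and then invoking $[1,1]=0$ to erase the dependence on parametric slots. Everything else is then largely a translation exercise once the ring is in hand.
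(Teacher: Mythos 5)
The paper itself gives no proof of this lemma; it cites \cite[Chapters~5, 9]{FM:CTC}, where the structure theory of abelian algebras in congruence modular varieties (Gumm--Herrmann) and the associated ring are developed in full. So there is no in-paper argument to compare against, and I must assess your sketch on its own merits.

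Your overall strategy — derive everything from the Mal'cev identities together with the term condition $[1,1]=0$ — is the right one, and it matches the spirit of Freese--McKenzie. However, the sketch has two substantive gaps. First, in part~(1) you say that commutativity and associativity of $+$ follow because ``the difference of the two sides is a term that sends $(z,z)$ to $z$ while being independent of $x$,'' and that the term condition then forces it to be constantly $z$. The implication ``sends $(z,z)$ to $z$ and is independent of $x$ $\Rightarrow$ constantly $z$'' is trivial and needs no term condition; the entire content is in establishing the \emph{independence} of $x$, and you do not say how the term condition yields that. The actual mechanism is subtler: one applies $C(1,1;0)$ to carefully chosen auxiliary terms so as to derive Mal'cev identities such as $m(a,b,c)=m(c,b,a)$ and the cancellation law $m(a,b,c)=c \Rightarrow a=b$, and only then do the ring and module laws fall out. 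Second, and more seriously, the claim in part~(2) that ``the group laws collapse to the ring laws for $+$ on $R_\A$ after substituting constants'' is not correct. Commutativity of $+_0$, i.e.\ $m(a,0,b)=m(b,0,a)$, is a statement about arbitrary $a,b\in A$ and does not follow by specializing the ring identity $(r+s)(x,z)=(s+r)(x,z)$, since you have no way to realize an arbitrary pair $(a,b)$ as $(r(x,z),s(x,z))$ at a single point $(x,z)$. The abelian group structure on $(A,+_0)$ must be established directly from the term condition (it in fact uses exactly the same identities on $m$ that you need for part~(1), so the correct order is: prove those $m$-identities first, then derive both the ring axioms for $R_\A$ and the module axioms for $\A_0$ from them). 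Finally, for the term-equivalence claim you discuss only unary polynomials, but the assertion is about the full clones; you need that every $n$-ary term of $(\A,0)$ decomposes as $\sum_i^{0} r_i *_0 x_i +_0 c$ with $r_i \in R_\A$ and $c=t(0,\dots,0)$, which is precisely Herrmann's affineness theorem and again rests on the term-condition machinery you would need to set up properly in part~(1). Part~(3) as you outline it is fine once the earlier parts are in place.
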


 Let $\A$ be an abelian Mal'cev algebra.
 For $r =(r_{ij})_{i,j\in [k]}$ a $k\times k$-matrix over $R_\A$, $x= (x_1,\dots,x_k)$ a $k$-tuple of variables and
 $z$ another variable, define
\[ r*_zx := \left(\sum_{j=1}^k r_{ij}(x_j,z)\right)_{i\in [k]} \]
 with term functions added by $s(x,z)+_z t(x,z) := m(s(x,z),z,t(x,z))$.
 Then $r*_zx$ is a $k$-tuple of $k+1$-ary term functions over $\A$.

 For each abelian Mal'cev algebra $\A$, the ring $\R_\A$ acts faithfully on $\A$. Hence the nilpotence degree of the Jacobson
 radical of $\R_\A$ is at most the height of the congruence lattice of $\A$.

\subsection{Distributive modules} \label{sec:dstructure}

 Recall that a ring is \emph{local} if it has a unique maximal left ideal. A ring $\R$ is \emph{semiperfect} if it has a set $e_1, ..., e_\ell$ of orthogonal idempotents with
 $e_1+\dots+e_\ell = 1$ such that $e_i R e_i$ is a local ring for all $i\in [\ell]$. Then the idempotents
 $e_1,\dots,e_\ell$ are called local.
 All left Artinian (in particular all finite) rings are semiperfect.

 A module is \emph{uniserial} if its submodules are linearly ordered by inclusion. 
 In particular, simple modules and $\Z_{p^n}$ for any prime $p$ and $n\in\N$ are uniserial.
 For an example over a noncommutative ring, let $F$ be a field and 
\[ R := \left\{ \begin{pmatrix} a & b \\ 0 & c \end{pmatrix} \st a,b,c\in F \right\}. \]
 Then $A := F^2$ with the usual action of matrices on column vectors forms a uniserial $\R$-module with submodules
 $0\leq F\times 0 \leq F^2$.

 Distributive modules over semiperfect rings can be decomposed as direct sums of uniserial modules over local subrings as follows.

\begin{lemma} \label{lem:distributive}
 Let $\R$ be a semiperfect ring with $1=\sum_{i=1}^\ell e_i$ for local orthogonal idempotents $e_1,\dots,e_\ell$ and let
 $\A$ be a distributive $\R$-module. Then
\begin{enumerate}
\item \cite[Lemma 4]{Fu:RLIM} \cite[1.28]{Tu:SSM} \label{it:eiA}
 $e_iA$ is a uniserial $e_iRe_i$-module for all $i\in [\ell]$.
\item \label{it:R'}
 $A= \sum_{i=1}^\ell e_i A$ is a distributive module over the subring $R' := \sum_{i=1}^\ell e_iRe_i$ of $\R$ with the
 induced action from $\R$. 
\item \label{it:JR'}
 $J(\R') \subseteq J(\R)$.
\end{enumerate}
\end{lemma}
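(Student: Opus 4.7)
The plan is to settle the three parts in turn, using (1) as a black box in the later parts. Part (1) is precisely the statement of the cited results of Fuller and Tuganbaev, so no additional argument is required.

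For part (2), the first step is to verify that $R'=\sum_{i=1}^\ell e_iRe_i$ really is a subring: the sum is direct as an abelian group by orthogonality of the $e_i$, the product $(e_iRe_i)(e_jRe_j)$ equals $0$ for $i\neq j$ and lies in $e_iRe_i$ for $i=j$, and $\sum e_i=1$ provides a two-sided identity. Thus $R'\cong\prod_{i=1}^\ell e_iRe_i$ as a ring. The decomposition $A=\bigoplus_{i=1}^\ell e_iA$ then holds as an $R'$-module, since $1=\sum e_i$ yields the sum and orthogonality gives directness, while $e_j\cdot e_iA=\delta_{ij}e_iA$ shows that each summand is $R'$-invariant. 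Next I would show that every $R'$-submodule $N\le A$ decomposes as $N=\bigoplus_i e_iN$ with $e_iN\le e_iA$ an $e_iRe_i$-submodule, again using $e_i\in R'$ and $1=\sum e_i$; conversely any such direct sum is $R'$-stable. Hence the lattice of $R'$-submodules of $A$ is isomorphic to the product of the lattices of $e_iRe_i$-submodules of the $e_iA$. By part (1) each factor is a chain, and a product of chains is distributive.

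For part (3), I would invoke the standard corner-ring identity $J(eRe)=eJ(R)e$, valid for any idempotent $e$ in a ring $R$. Applied to each local idempotent $e_i$ this gives $J(e_iRe_i)=e_iJ(R)e_i\subseteq J(R)$, the inclusion holding because $J(R)$ is a two-sided ideal. Since $R'$ is the ring direct product of the $e_iRe_i$, its Jacobson radical is the direct sum $J(R')=\bigoplus_i J(e_iRe_i)=\bigoplus_i e_iJ(R)e_i$, which therefore lies in $J(R)$.

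The main point to handle carefully is the decomposition of $R'$-submodules in part (2): a priori there could be more $R'$-submodules than $R$-submodules (since $R'\subseteq R$), so distributivity over $R$ would not directly transfer. The presence of the full set of idempotents $e_1,\dots,e_\ell$ inside $R'$ is what forces every $R'$-submodule to split along the idempotent decomposition, reducing the problem to the uniserial lattice of each $e_iA$ provided by part (1). Part (3) is essentially bookkeeping around the well-known formula for the Jacobson radical of a corner ring.
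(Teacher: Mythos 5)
Your proofs of parts (1) and (2) match the paper's: part (1) is a citation, and for part (2) the paper likewise uses orthogonality of the idempotents to decompose every $R'$-submodule of $A$ as a direct sum of $e_iRe_i$-submodules of the $e_iA$, and concludes from part (1) that the lattice is a product of chains (though you supply a bit more detail about why $R'$ is a ring and why the lattice factorization holds, which is welcome). For part (3) you take a genuinely different route: you invoke the standard corner-ring identity $J(eRe)=eJ(R)e$ together with the two-sided ideal property of $J(R)$ and the product decomposition $J(R')=\bigoplus_i J(e_iRe_i)$. This is correct and shorter. The paper instead gives a self-contained, elementary verification: it takes $s\in J(e_1Re_1)$ and an arbitrary $r\in R$, writes $1-rs = 1 - e_1re_1se_1 - \sum_{i>1}e_ire_1se_1$, shows the first correction is invertible inside the local corner ring and the remaining off-diagonal piece is square-zero (by orthogonality), and concludes $1-rs$ is a unit in $R$, hence $s\in J(R)$. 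Your argument buys brevity at the cost of relying on the corner-ring Jacobson radical formula as a black box; the paper's trades a few more lines of computation for avoiding that dependence. Both are valid.
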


\begin{proof}
\eqref{it:eiA}
 We include the short proof from~\cite[Lemma 4]{Fu:RLIM} for the convenience of the reader.
 For $i \in [\ell]$, write $e := e_i$ and $J := J(\R)$.
 We first show that the set of $\R$-modules $\{ Rex \st x \in A \}$ is linearly ordered under inclusion.
 Seeking a contradiction, suppose that there are $x, y \in A$ such that $Rex$ and $Rey$ are not comparable.
 Since $Je$ is the unique maximal $\R$-submodule of $Re$ by the assumptions,
 $Rex$ and $Rey$ have unique maximal submodules $Jex$ and $Jey$, respectively.
 Hence $Rex \cap Rey = Jex \cap Jey$. It follows that
\[\varphi \colon Rex / Jex \times  Rey / Jey \rightarrow (Rex + Rey)/(Jex + Jey), \]
\[ (a+Jex, b+Jey) \mapsto a+b+ (Jex+Jey)\] 
is an isomorphism. That $\varphi$ is a well-defined surjective homomorphism is clear.
 For injectivity, let $a\in Rex, b\in Rey$ such that $a+b \in Jex+Jey$. Then $a+b = c+d$ for some $c \in Jex, d \in Jey$.
 So $a-c = d-b\in Rex\cap Rey$. Since $Rex \cap Rey = Jex \cap Jey$, it follows that $a-c \in Jex$ and consequently
 $a\in Jex$.
 Similarly, $b \in Jey$, and so $\varphi$ is injective.
Since $Rex / Jex \times  Rey / Jey \cong (Re/Je)^2$ is not distributive, this contradicts the distributivity of $\A$.
Thus $\{ Rex \st x \in A \}$ is linearly ordered.

 So for each $x, y \in A$ either $ex \in eRey$ or $ey \in eRex$. Therefore $eA$ is a uniserial $eRe$-module.

\eqref{it:R'} Using the orthogonality of the idempotents, we see that every $\R'$-submodule of $\A$ is a direct sum of
 $\R'$-submodules of $e_iA$ for $i\in [\ell]$. Further the $\R'$-submodules of $e_iA$ coincide with the
 $e_iRe_i$-submodules. Since the latter are linearly ordered by~\eqref{it:eiA}, it follows that $\sum_{i=1}^\ell e_i A$
 is a distributive $\R'$-module.  

 For \eqref{it:JR'}, let $s\in J(e_1Re_1)$ and $r\in R$. Then $s = e_1 s e_1$ and so
 \[1-rs = 1 - \sum_{i=1}^\ell e_i r e_1 s e_1 = 1 - e_1 r e_1 s e_1 - \sum_{i > 1} e_i r e_1 s e_1.\]
 
 Since $e_1 r e_1 s e_1$ is in the Jacobson radical of the local ring $e_1 R e_1$ with identity $e_1$,
 we see that $e_1 - e_1 r e_1 s e_1$ is a unit in $e_iRe_i$. Since the quotient of $e_1 R e_1$ by $J(e_1Re_1)$
 is a division ring, the inverse of $e_1 - e_1 r e_1 s e_1$ has the form $e_1-u$ for some $u \in J(e_1 R e_1)$. 
 By the orthogonality of idempotents again,
\begin{align*}
(1-u)(1-rs) & = \underbrace{(1-u)(1- e_1 r e_1 s e_1)}_{=1} - (1-\underbrace{u}_{=ue_1})(\sum_{i>1} e_i r e_1 s e_1) \\ 
& = 1 - \sum_{i > 1} e_i r e_1 s e_1.
\end{align*}

Since $(\sum_{i > 1} e_i r e_1 s e_1)^2 = 0$, we see that $1 - \sum_{i > 1} e_i r e_1 s e_1$ has inverse $1 + \sum_{i > 1} e_i r e_1 s e_1.$ Hence $1 -rs$ is a unit for every $r \in R$, so $s \in J(\R)$. 

A similar argument shows that $J(e_i R e_i) \subseteq J(\R)$ for each $i \in [\ell]$.
Hence $J(\R') = \sum_{i=1}^\ell J(e_i Re_i) \subseteq J(\R)$.
\end{proof}

 Let $\A$ be a module over a ring $\R$, let $k\in\N$ and $T = (t_{ij})_{1\leq i,j \leq k}$
 a $k\times k$-matrix over $\R$. Then $T$ acts on $x := (x_1,\dots,x_k)$ in $\A^k$ by 
\[ Tx := (\sum_{j=1}^k t_{1j}x_j, \dots, \sum_{j=1}^k t_{kj}x_j). \]
 Here $A^k\to A^k,\ x\mapsto Tx,$ can be considered as a $k$-tuple of $k$-ary term functions of $\A$
 but is not necessarily an $\R$-homomorphism if $\R$ is noncommutative.

 For interpolating functions with domain $\A^k$ in Section~\ref{sec:distributive}, it will be useful to cover $\A^k$ by its
 $\R$-submodules that are isomorphic to $\A\times (J(\R)A)^{k-1}$.
 We collect some information about these submodules.
 Recall that by Lemma~\ref{lem:distributive} every finite distributive module has a reduct $\A$ as in the following.
 
\begin{lemma} \label{lem:free}
 Let $\R$ be a finite direct product of finite local rings with $J := J(\R)$,
 let $\A$ be a finite distributive $\R$-module, let $k\in\N$ and let
  \[ V := \{ M\leq \A^k \st (JA)^k\leq M, M/(JA)^k\cong \A/JA\}. \]
 Then
\begin{enumerate}
\item \label{it:transitive}
 for every $M\leq \A^k$ such that $(JA)^k\leq M$ and $M/(JA)^k$ embeds into $\A/JA$
 there exists an invertible $k\times k$-matrix $T$ over $\R$ such that $TM\leq A\times (JA)^{k-1}$;  
 in particular, $\GL_k(\R)$ acts transitively on $V$. 
\item \label{it:cap}
 For all distinct $M,N\in V$ there exists $L<\A$ such that $M\cap N \leq L^k$. 
\item \label{it:cup}
 $A^k = \bigcup V \cup \bigcup \{ L^k \st L<\A \}$.
\end{enumerate} 
\end{lemma}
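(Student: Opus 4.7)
The plan is to exploit the decomposition of $\R$ as a direct product to reduce each of the three claims to a per-component local-uniserial calculation. Write $\R = \prod_{i=1}^\ell R_i$ with $R_i$ local of maximal ideal $J_i$, let $e_1,\dots,e_\ell$ be the associated orthogonal idempotents, and decompose $\A = \bigoplus_{i=1}^\ell e_i\A$. By Lemma~\ref{lem:distributive}\eqref{it:eiA} each $e_i\A$ is uniserial over $R_i = e_iRe_i$, so $J_i e_i A$ is its unique maximal proper submodule and $S_i := e_i A/J_i e_i A$ is a simple $R_i/J_i$-module of dimension at most one. Since $JA = \bigoplus J_i e_i A$ and submodules respect the direct sum, any $M\leq \A^k$ decomposes as $\bigoplus M_i$ with $M_i\leq (e_iA)^k$, and $M/(JA)^k$ embeds in $A/JA$ precisely when each $M_i/(J_i e_i A)^k$ is $0$ or one-dimensional over $R_i/J_i$; membership $M\in V$ corresponds to every component being one-dimensional.

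For~\eqref{it:transitive} I treat each component separately and use $\GL_k(\R) = \prod_i\GL_k(R_i)$. If $M_i/(J_i e_i A)^k = 0$, set $T_i$ to be the identity. Otherwise $M_i = R_i a + (J_i e_i A)^k$ for some $a = (a_1,\dots,a_k)$ with $a_j\notin J_i e_i A$ for some $j$; simplicity of $S_i$ makes $\bar a_j$ a generator, so $\bar a_l = r_l\bar a_j$ in $S_i$ for suitable $r_l\in R_i$ and all $l\neq j$. An elementary matrix $T_i\in\GL_k(R_i)$ implementing $a_l\mapsto a_l - r_l a_j$ (after a permutation placing index $j$ first) then maps $a$ into $e_iA\times(J_i e_i A)^{k-1}$ while preserving $(J_i e_i A)^k$, whence $T_iM_i\leq e_iA\times(J_i e_i A)^{k-1}$. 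Assembling produces $T\in\GL_k(\R)$ with $TM\leq A\times(JA)^{k-1}$. When $M\in V$, each component of $(TM)/(JA)^k$ is the full one-dimensional space $S_i\times 0$, so $TM = A\times(JA)^{k-1}$; applying this to two elements of $V$ yields transitivity.

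For~\eqref{it:cap}, distinct $M,N\in V$ differ in some component $j$, so $M_j = R_j a + (J_j e_j A)^k$ and $N_j = R_j b + (J_j e_j A)^k$ have linearly independent reductions $\bar a,\bar b$ over $R_j/J_j$ in $(S_j)^k$. Any $ra+v = sb+w\in M_j\cap N_j$ with $v,w\in(J_j e_j A)^k$ gives $r\bar a = s\bar b$, forcing $r,s\in J_j$ and hence $M_j\cap N_j\leq (J_j e_j A)^k$. Setting $L := J_j e_j A\oplus\bigoplus_{i\neq j}e_iA$ gives a proper submodule of $\A$ since $A/L\cong S_j\neq 0$, and $M\cap N\leq L^k$ by construction. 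For~\eqref{it:cup}, take $x\in A^k$, decompose as $(x^{(1)},\dots,x^{(\ell)})$ with $x^{(i)}\in(e_iA)^k$, and case-split: if some component $x^{(j)}$ has every coordinate in $J_j e_j A$, then $x\in L^k$ for the same $L$; otherwise set $M_i := R_ix^{(i)} + (J_i e_i A)^k$ in every component, which has one-dimensional quotient, so $M := \bigoplus M_i\in V$ contains $x$.

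The main technical point is the elementary-matrix argument in~\eqref{it:transitive}: the existence of scalars $r_l\in R_i$ with $\bar a_l = r_l\bar a_j$ depends crucially on $S_i$ being simple, which is the distributivity hypothesis obtained via Lemma~\ref{lem:distributive}\eqref{it:eiA}. Once this is in place, the remaining assertions reduce to elementary bookkeeping across the finite direct sum decomposition of $\A$.
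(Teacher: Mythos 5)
Your proof is correct and rests on the same underlying idea as the paper's: reduce modulo $J$ to the semisimple quotient, use that a submodule of $\overline{\A}^k$ embedding into $\overline{\A}$ is a one-dimensional ``line'' in each $K_i$-component, and move it to the first coordinate by an invertible matrix. The differences are organizational rather than conceptual. For~\eqref{it:transitive} you build $T$ directly as a product of permutation and elementary matrices over each local ring $R_i$, which packages the ``lift from $\overline{\R}$'' into the elementary-matrix construction itself; the paper instead builds $\overline{T}$ over $\overline{\R}$ first and then lifts it to $\GL_k(\R)$ via nilpotence of $J$. For~\eqref{it:cap} you compute directly in a single differing component rather than quoting~\eqref{it:transitive} to put $M\cap N$ in the form $T^{-1}(L\times (JA)^{k-1})\le L^k$, and for~\eqref{it:cup} you case-split on whether some $e_j$-component of $x$ lies entirely in $(J_je_jA)^k$ rather than invoking the fact that a cyclic quotient of the regular semisimple module embeds back into it. These are valid and roughly equally economical alternatives; you gain a slightly more explicit $T$, the paper gains a cleaner uniform appeal to~\eqref{it:transitive}.

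Two small points worth tightening if you write this up. First, the step ``whence $T_iM_i\le e_iA\times(J_ie_iA)^{k-1}$'' uses more than $T_i$ fixing $(J_ie_iA)^k$ and sending $a$ into the target: since $R_i$ need not be commutative, $T_i(ra)$ is not literally $rT_ia$, and you should argue modulo $J_ie_iA$ using commutativity of $K_i = R_i/J_i$ (the same point is silently used in the paper when it asserts $\overline{T}\overline{M}=N/J\times 0^{k-1}$). Second, your case-split in~\eqref{it:cup} tacitly assumes each $e_iA\neq 0$; otherwise the ``$L$'' you form could equal $\A$. This matches the paper's implicit convention (trivial idempotent components are discarded), but it deserves a word.
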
  

\begin{proof}
 Assume $\R = \sum_{i=1}^{\ell} \R_i$ for finite local rings $\R_i$ for $i\in [\ell]$.
 By Lemma~\ref{lem:distributive} we have a direct decomposition $A = \sum_{i=1}^{\ell} A_i$ into uniserial $\R$-submodules
 $A_i$ for $i\in [\ell]$.
 Factoring $\R_i$ by the annihilator of $\A_i$ if necessary, we may assume that $\R_i$ is faithful on $\A_i$ for all $i\in [\ell]$.
 Since $\R_i$ is local and finite, $\R_i/J(\R_i)$ is a finite field $K_i$ for each $i\in[\ell]$. 
 Further $\overline{\R} := \R/J \cong \sum_{i=1}^\ell K_i$ and $\overline{\A} := \A/JA$ is isomorphic to the
 regular $\overline{\R}$-module $\sum_{i=1}^\ell K_i$.
 
 For proving~\eqref{it:transitive}, let $M\leq \A^k$ such that $(JA)^k\leq M$ and $\overline{M} := M/(JA)^k$
 embeds into $\overline{\A}$. Then we have $S\subseteq [\ell]$ such that $\overline{M}\cong  \sum_{i\in S} K_i$.
 More precisely, we also have $j_i\in [k]$ for $i\in S$ such that 
 \[ \overline{M} = \sum_{i\in S} 0 \times \dots \times 0 \times \underset{j_i}{K_i} \times 0 \times \dots \times 0 \leq \overline{\A}^k. \]
 Clearly there exists an invertible $k\times k$ matrix $\overline{T}$ over $\overline{\R}$ that moves $K_i$ from the
 $j_i$-th component of $\overline{A}^k$ to the first component for all $i\in S$.
 Hence $\overline{T}\overline{M} = \sum_{i\in S} K_i \times 0^{k-1} \leq \overline{\A} \times 0^{k-1}$.

 Now let $T\in R^{k\times k}$ such that the projection of $T$ modulo $J$ is $\overline{T}$.
 Since $\overline{T}$ is invertible over $\overline{\R}$, we have $S\in R^{k\times k}$ such that $ST = I_k-U$ for $I_k$
 the $k\times k$ identity matrix over $\R$ and some $U\in J^{k\times k}$.
 Since $J$ is nilpotent, it follows that $ST\in\GL_k(\R)$ and further $T\in\GL_k(\R)$.
 Since $\overline{T}\overline{M} = N/J \times 0^{k-1}$ for some $JA\leq N\leq\A$, 
 we obtain $TM \subseteq N \times (JA)^{k-1}$.
 Equality follows since $|M| = |N\times (JA)^{k-1}|$ and $T$ is invertible. Thus~\eqref{it:transitive} is proved.
 
 For~\eqref{it:cap} let $M,N\in V$ be distinct. By~\eqref{it:transitive}
 we have $T\in\GL_k(\R)$ and $JA\leq L<\A$ such that $T(M\cap N) = L\times (JA)^{k-1}$.
 Hence $M\cap N \leq T^{-1} L^k \leq L^k$.

 For~\eqref{it:cup} let $a\in A^k$ and $M := Ra+(JA)^k$.
 Then $\overline{M} := M/(JA)^k$ is a cyclic $\overline{\R}$-module, hence isomorphic to a quotient of the regular
 $\overline{\R}$-module $\overline{\A}$.
 Since $\overline{\A}$ is semisimple, $\overline{M}$ also embeds into $\overline{\A}$.
 By~\eqref{it:transitive} we have $T\in\GL_k(\R)$ and $JA\leq L\leq\A$
 such that $TM = L\times (JA)^{k-1}$.
 If $L=A$, then $M\in V$ and $a\in\bigcup V$. Else if $L < \A$, then $M \leq T^{-1} L^k \leq L^k$ and $a\in\bigcup\{ L^k \st L<\A\}$.
 Thus $A^k \subseteq \bigcup V \cup \bigcup \{ L^k \st L<\A \}$. The converse inclusion is trivial.
\end{proof}

 \subsection{Inner rank of matrices}

 The rank of matrices over fields can be generalized to arbitrary rings as follows (see~\cite[Proposition 5.4.3]{Co:FIR}).
 Let $\R$ be a ring, $m,n\in\N$ and $A \in R^{m\times n}$. The \emph{inner rank} of $A \neq 0$ is defined as
 \[ \rk(A) := \min \{ r\in\N \st A = BC \text{ for some } B\in R^{m\times r}, C\in R^{r\times n} \} \]
 and $\rk(0):= 0$.
 
 Then $\rk(A)$ is the least $r$ such that the right $\R$-submodule of $R^m$ that is generated by the columns of $A$ is
 contained in an $r$-generated module (equivalently, the least $r$ such that the left $\R$-submodule of $R^n$ that is generated
 by the rows of $A$  is contained in an $r$-generated module). Note that in particular $\rk(A) \leq\min(m,n)$.

 If $\R$ is a field, the inner rank is just the usual row or column rank.

\subsection{Uniformly generated functions} \label{sec:uniform}

Let $\A,\B$ be algebras.
 For $k,n\in\N$, let 
\begin{multline*}
R_{k,n} := \{ r\in (\Clo(\A)^{(k)})^k \st \\ r(x) = ( v_1 (w_1(x),\dots,w_n(x)), \dots, v_k (w_1(x),\dots,w_n(x)) )  \\ \text{ for some }
   v_1,\dots,v_k\in\Clo(\A)^{(n)}, w_1,\dots, w_n\in\Clo(\A)^{(k)} \} \end{multline*}
 denote the set of $k$-tuples of $k$-ary term functions on $\A$ that factor through $n$-ary term functions.

 For an $\R$-module $\A$ we simply have
\[  R_{k,n} = \{ A^k \to A^k, x\mapsto ax \st a\in R^{k\times k}, \rk(a) \leq n \}. \] 
 A function $f\colon A^k \rightarrow B$ is \emph{generated by its $n$-ary $\A,\B$-minors} if $f$ is in the
 clonoid from $\A$ to $\B$ that is generated by the $n$-ary $\A,\B$-minors of $f$.
 More explicitly $f$ is generated by its $n$-ary $\A,\B$-minors iff there exist
 $\ell\in \N, r_1, \ldots, r_\ell \in R_{k,n}$ and $s \in \Clo(\B)^{(\ell)}$ such that 
\begin{equation} \label{eq-nary-generation}
 f(x) = s(f(r_1(x)), \ldots, f(r_\ell(x))) \text{ for all } x \in A^k.
\end{equation}
 We say that a set of functions $U\subseteq F(A,B)^{(k)}$ is \emph{uniformly generated by $n$-ary
 $\A,\B$-minors} if there exist $\ell\in\N, r_1, \ldots, r_\ell \in R_{k,n}$ and $s \in \Clo(\B)^{(\ell)}$ such that
 \eqref{eq-nary-generation} holds for all $f \in U$. 

 Moreover, we say that an operator $d \colon F(A,B)^{(k)} \rightarrow F(A,B)^{(k)}$ \emph{can be uniformly represented by
 $n$-ary $\A,\B$-minors on $U$} if there exist $\ell, r_1, \ldots, r_\ell, s$ as above such that for all $f \in U$, 
\[d(f)(x) = s(f(r_1(x)), \ldots, f(r_\ell(x))) \text{ for all } x \in A^k.\]

 For example, for modules $\A,\B$ every binary affine function $f\colon A^2\to B$ satisfies
\[ f(x_1,x_2) = f(x_1,0)-f(0,0)+f(0,x_2) \text{ for all } x_1,x_2\in A, \]
 hence is generated by its unary $\A,\B$-minors $f(x,0),f(0,0),f(0,x)$.
 Moreover,
 the set of affine functions $U \subseteq F(A,B)^{(2)}$ is uniformly generated by unary $\A,\B$-minors.
 Thus the identity operator can be uniformly represented by unary $\A,\B$-minors on $U$ (but in general
 not on all of $F(A,B)^{(2)}$).

 We will use the following properties of uniformly generated sets of functions between modules in
 Section~\ref{sec:distributive}. 

\begin{lemma} \label{lem:uniform}
 Let $\A$ be an $\R$-module, $\B$ an $\S$-module, $k,n\in\N$, $U\subseteq F(A,B)^{(k)}$ and
 $d: U\to F(A,B)^{(k)},\ f\mapsto f'$.
\begin{enumerate}
\item \label{it:uniform}
 Then $d$ can be uniformly represented by $n$-ary $\A,\B$-minors on $U$ if and only if there exists
 $s\colon \{r\in R^{k\times k} \st \rk(r) \leq n \} \to S$ with finite support such that for all $f\in U$ and all
 $x\in A^k$
\[ f'(x) = \sum_{r\in R^{k\times k}, \rk(r)\leq n} s(r) f(rx). \]
\item \label{it:f-f'}
 Assume that $d$ can be uniformly represented by $n$-ary $\A,\B$-minors on $U$ and that
 $\{f-d(f) \colon f \in U\}$ is uniformly generated by $n$-ary $\A,\B$-minors.
 Then $U$ is uniformly generated by $n$-ary $\A,\B$-minors.
\item \label{it:step}
 Let $\ell\in\N$ and let $\M_1,\dots, \M_\ell$ be submodules of $\A$.
 Assume that $F(M_i,B)^{(k)}$ is uniformly generated by $n$-ary $\M_i,\B$-minors for each $i\in[\ell]$ and that
 $$U_0 := \{f\in F(A,B)^{(k)} \st f(M_i^k) = 0 \text{ for all } i\in[\ell] \}$$
 is uniformly generated by $n$-ary $\A,\B$-minors.

 Then $F(A,B)^{(k)}$ is uniformly generated by $n$-ary $\A,\B$-minors.
\end{enumerate}
\end{lemma}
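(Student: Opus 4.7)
For part~\eqref{it:uniform}, I would use that every term operation of the $\S$-module $\B$ has the $\S$-linear form $t(y_1,\ldots,y_\ell)=\sum_j s_j y_j$, while every $r\in R_{k,n}$ over the module $\A$ is a matrix action $x\mapsto ax$ for some $a\in R^{k\times k}$ with $\rk(a)\le n$. Thus a uniform representation $d(f)(x)=s(f(r_1(x)),\ldots,f(r_\ell(x)))$ expands to $\sum_j s_j f(a_j x)$, and collecting terms with identical matrices yields the claimed finite-support sum $\sum_a s(a) f(ax)$. Conversely, any such finite-support sum reassembles into a single $\S$-linear term operation of $\B$ applied to the finitely many relevant $n$-ary $\A,\B$-minors of $f$.

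For part~\eqref{it:f-f'}, I would decompose $f=d(f)+(f-d(f))$ and show both summands are uniformly expressible in $n$-ary $\A,\B$-minors of $f$. The first summand is handled directly by the given uniform representation of $d$. For the second, the hypothesis provides a uniform generation of $f-d(f)$ in terms of its own $n$-ary minors; substituting $(f-d(f))(q_j(x))=f(q_j(x))-d(f)(q_j(x))$ and applying the representation of $d$ at $q_j(x)$ rewrites each such value in terms of entries $f(a_i b_j x)$, where $a_i$ comes from the representation of $d$ and $b_j$ from $q_j$. Since $\rk(a_i b_j)\le\min(\rk(a_i),\rk(b_j))\le n$ by submultiplicativity of inner rank, these are themselves $n$-ary $\A,\B$-minors of $f$.

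Part~\eqref{it:step} is the main content. The plan is to build iteratively an operator $d_\ell\colon F(A,B)^{(k)}\to F(A,B)^{(k)}$ that is uniformly representable by $n$-ary $\A,\B$-minors and satisfies $d_\ell(f)|_{M_j^k}=f|_{M_j^k}$ for every $j\in[\ell]$; then $f-d_\ell(f)\in U_0$, and applying part~\eqref{it:f-f'} with $d:=d_\ell$, using that $U_0$ is uniformly generated by hypothesis, concludes the proof. For each $i$, the uniform generation of $F(M_i,B)^{(k)}$ uses term functions of $\M_i$, which are restrictions of term functions of $\A$ since $\M_i\le\A$, and thus extends to an operator $\pi_i$ on $F(A,B)^{(k)}$ that is uniformly representable by $n$-ary $\A,\B$-minors and agrees with $f$ on $M_i^k$. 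The crucial observation, and the heart of the argument, is that $\pi_i$ preserves vanishing on the power of any submodule of $\A$: if $N\le\A$ and $g|_{N^k}=0$, then every term function of $\A$ maps $N^k$ into $N^k$, so all arguments of the $\B$-term in the representation of $\pi_i$ vanish, and since that term is $\S$-linear and hence sends the zero tuple to $0$, also $\pi_i(g)|_{N^k}=0$. Setting $d_0:=0$ and $d_i(f):=d_{i-1}(f)+\pi_i(f-d_{i-1}(f))$, induction on $i$ yields $d_i(f)|_{M_j^k}=f|_{M_j^k}$ for all $j\le i$, which gives $d_\ell$ as required. The main obstacle is precisely this preservation-of-vanishing property, without which the iterative correction step would destroy earlier matches.
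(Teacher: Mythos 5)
Your proof is correct and takes essentially the same route as the paper: part (1) by expanding $\S$-linear term operations and matrix-valued $R_{k,n}$; part (2) by composing the two representations and using submultiplicativity of inner rank; part (3) by building an iterated correction operator (your $d_\ell$ is precisely the paper's $d$ defined via $e_\ell\cdots e_1(f)=f-d(f)$) and reducing to $U_0$ via part (2). The one place you add value is in making explicit the "preservation of vanishing" claim — that applying a uniformly represented operator to a function vanishing on $N^k$ still vanishes on $N^k$, because submodules are invariant under the matrix actions and the outer $\B$-term is $\S$-linear — which the paper uses silently in asserting that $e_\ell\cdots e_1(f)$ vanishes on all $M_i^k$.
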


\begin{proof}
 \eqref{it:uniform} is straightforward from the definition.

 For~\eqref{it:f-f'} let $s,t\colon \{r\in R^{k\times k} \st \rk(r) \leq n \} \to S$ such that for all $f\in U$
 and all $x\in A^k$
\begin{equation} \label{eq:f'}
  f'(x) = \sum_{r\in R^{k\times k}, \rk(r)\leq n} s(r) f(rx),
\end{equation} 
\[  (f-f')(x) = \sum_{u\in R^{k\times k}, \rk(u)\leq n} t(u) (f-f')(ux). \]
 Since $\rk(ru) \leq \min(\rk(r),\rk(u))$, it follows that 
\begin{align*}
 (f-f')(x) = & \sum_{u\in R^{k\times k}, \rk(u)\leq n} t(u) \left( f(ux) - \sum_{r\in R^{k\times k}, \rk(r)\leq n} s(r) f(rux) \right)
\end{align*}                            
is generated by $n$-ary $\A,\B$-minors of $f$.
 Together with~\eqref{eq:f'} we see that $f=(f-f') +f'$ is generated by $n$-ary $\A,\B$-minors of $f$
 using the same formula for every $f\in U$. That is, $U$ is uniformly generated  by $n$-ary $\A,\B$-minors.

 For~\eqref{it:step}, let $i\in [\ell]$.
 By the assumptions we have an operator $d_i$ that is uniformly represented by $n$-ary $\A,\B$-minors on
 $F(A,B)^{(k)}$ such that
\[ d_i(f)|_{M_i^k} = f|_{M_i^k} \text{ for every } f\in F(A,B)^{(k)}. \]
 Then $e_i(f) := f-d_i(f)$ is $0$ on $M_i^k$ for all $f\in F(A,B)^{(k)}$.
 Consider 
\[ e_\ell \dots e_2 e_1 (f) = f-\underbrace{d_1(f) - d_2(f-d_1(f)) - \dots}_{=:d(f)}. \] 
 By construction the operator $d$ is uniformly represented by $n$-ary $\A,\B$-minors on
 $F(A,B)^{(k)}$. Further $e_\ell \dots e_2 e_1 (f)$ is $0$ on $M_i^k$ for all $f\in F(A,B)^{(k)}$. Hence
 \[ \{ f-d(f) \st f\in F(A,B)^{(k)} \} \subseteq U_0. \]
 Since $U_0$ is uniformly generated by $n$-ary $\A,\B$-minors by assumption,
 \eqref{it:f-f'} yields that $F(A,B)^{(k)}$ is uniformly generated by $n$-ary $\A,\B$-minors.
\end{proof}

\subsection{Sums} Let $S$ be a subset of an abelian group $(A,+)$. To simplify notation we will also write $\sum S$ for
$\sum_{x\in S} x$.

%%% %%%%%%%%%%%%           Section 3: Clonoids from Distributive Module         %%%%%%%%%%%%%%%

\section{Clonoids from distributive modules}  \label{sec:distributive}

This section consists of the proof that
 every function from a finite distributive $\R$-module $\A$ into a coprime module $\B$ is 
 generated by its $n$-ary $\A,\B$-minors,
 where $n$ is the nilpotence degree of the Jacobson radical of $\R$.
 This is the basis of all our finite generation results for clonoids between coprime abelian Mal'cev algebras. 
 More precisely we show the following.

\begin{theorem} \label{thm:Zn}
  Let $\A$ be a finite distributive $\R$-module, let $n\in\N$ such that $J(\R)^n = 0$,
  and let $\B$ be an $\S$-module such that $|A|$ is invertible in $\S$.
  
 Then for all $k\in\N$ there exists $s\colon \{r\in R^{k\times k} \st \rk(r) \leq n \} \to S$ such that
 for all $f\colon A^k\to B$ and all $x\in A^k$
\begin{equation*} 
  f(x) = \sum_{r\in R^{k\times k}, \rk(r)\leq n} s(r) f(rx).
\end{equation*}  
\end{theorem}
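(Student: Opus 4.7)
The plan is to prove, for every $k\in\N$, that $F(A,B)^{(k)}$ is uniformly generated by its $n$-ary $\A,\B$-minors; by Lemma~\ref{lem:uniform}\eqref{it:uniform} this is equivalent to producing the asserted $s$. I proceed by induction on $|A|$ with an inner induction on $k$. The base cases are direct: if $|A|=1$, take $s(0)=1$ for the zero matrix (rank $0\leq n$); if $k\leq n$, take $s(I_k)=1$ for the identity matrix (rank $k\leq n$).

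Before the inductive step, I invoke Lemma~\ref{lem:distributive} to replace $\R$ by the subring $\R':=\sum_{i=1}^{\ell}e_i\R e_i$, a finite direct product of finite local rings; $\A$ remains distributive over $\R'$ and $J(\R')^n=0$ by Lemma~\ref{lem:distributive}\eqref{it:JR'}. This reduction is harmless: the inner rank of a matrix in $(\R')^{k\times k}$ viewed inside the larger ring $\R$ can only decrease, so any uniform formula using matrices in $(\R')^{k\times k}$ of inner rank $\leq n$ over $\R'$ lifts, by extending $s$ with zeros, to a formula using matrices in $\R^{k\times k}$ of inner rank $\leq n$ over $\R$. Henceforth the hypotheses of Lemma~\ref{lem:free} hold; set $J:=J(\R)$.

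For the inductive step, assume $k>n$ and let $\M_1,\dots,\M_\ell$ enumerate the maximal proper $\R$-submodules of $\A$. Apply Lemma~\ref{lem:uniform}\eqref{it:step} with this family. Hypothesis~(a), that each $F(M_i,B)^{(k)}$ is uniformly generated by $n$-ary $\M_i,\B$-minors, is immediate from the outer induction: each $\M_i$ is a distributive $\R$-module (its submodule lattice is an interval in the distributive submodule lattice of $\A$), $|M_i|<|A|$, and $|M_i|$ divides $|A|$ so is invertible in $\S$.

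The main obstacle is hypothesis~(b), that $U_0:=\{f\in F(A,B)^{(k)}\st f|_{M_i^k}=0\ \text{for all}\ i\}$ is uniformly generated by $n$-ary $\A,\B$-minors. Each $f\in U_0$ vanishes on $\bigcup\{L^k\st L<\A\}$, since every proper submodule $L$ lies inside some $\M_i$; hence by Lemma~\ref{lem:free}\eqref{it:cup} such an $f$ is supported on $\bigcup V$, and by Lemma~\ref{lem:free}\eqref{it:cap} each generic $x$ sits in a unique $M\in V$. The construction should then exploit the transitive $\GL_k(\R)$-action on $V$ (Lemma~\ref{lem:free}\eqref{it:transitive}): every $M\in V$ has the form $T^{-1}M_0$ for some $T\in\GL_k(\R)$ with $M_0:=A\times(JA)^{k-1}$. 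The challenge is to produce a \emph{single} $s$ that realizes $f(x)$ uniformly in both $f\in U_0$ and $x\in A^k$; this will likely require an averaging or inclusion-exclusion argument, normalized using that $|A|$ is invertible in $\S$, to cancel the dependence on the specific $M$ containing $x$, together with an iterative reduction along the filtration $A\supseteq JA\supseteq J^2A\supseteq\cdots\supseteq J^nA=0$ that progressively lowers the effective rank below $n$. Making this rigorous---producing a single $s$ that serves all $f\in U_0$ and all $x$ at once---is the technical heart of the proof.
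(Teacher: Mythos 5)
Your framework is the right one: reduce to $\R$ a direct product of finite local rings via Lemma~\ref{lem:distributive}, then induct on $|A|$ and reduce via Lemma~\ref{lem:uniform}\eqref{it:step} to showing that $U_0$ (the functions vanishing on $M^k$ for every proper $M<\A$) is uniformly generated by $n$-ary minors. This matches the paper's structure, and your verification of hypothesis (a) (submodules of a distributive module are distributive, and $|M_i|$ divides $|A|$ so is invertible in $\S$) is correct.

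However, you have explicitly left the main step unproven, and this is a genuine gap, not a routine verification. You correctly identify that $U_0$ consists of functions supported on $\bigcup V$ and that the $\GL_k(\R)$-action on $V$ is transitive, but you stop at ``this will likely require an averaging or inclusion-exclusion argument.'' The paper supplies this with two nontrivial lemmas. Lemma~\ref{lem:gM} shows, by an inner induction averaging over cosets of the smallest submodule of a uniserial summand, that for each maximal $M<\A$ the truncation $f\mapsto f'_M$ (keeping $f$ on $A\times M^{k-1}$ and zeroing it elsewhere) is uniformly representable. Lemma~\ref{lem:fN} then iterates this over all maximal submodules to produce the truncation $f\mapsto f_N$ for $N=A\times(JA)^{k-1}$, using in the first slot the $n-1$-ary formula on $JA$ coming from the outer induction (since $J(\R/\Ann(J))$ has nilpotence degree $n-1$), and transports to arbitrary $N\in V$ by conjugating with $T\in\GL_k(\R)$. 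Only then does the sum $f=\sum_{N\in V}f_N$ close the argument. These two lemmas are where the hypotheses --- distributivity (via the uniserial decomposition of Lemma~\ref{lem:distributive}), the nilpotence degree $n$, and the invertibility of $|A|$ in $\S$ (needed to divide by $|R_1|$ and $|I|$ in the averaging steps) --- are actually used; without them you have only restated what must be shown.

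One further caveat on your inner induction on $k$: the base case ``$k\le n$, take $s(I_k)=1$'' is correct, but that inner induction plays no further role in your sketch and is not how the paper lowers the effective arity. The paper's arity reduction comes from the rank bound $\rk(r')\le\rk(r)+1$ for block-diagonal matrices in the proof of Lemma~\ref{lem:fN}, driven by the outer induction on $|A|$ applied to the $\R/\Ann(J)$-module $JA$, not by an independent induction on $k$.
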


The equation in Theorem~\ref{thm:Zn} can be viewed as a generalized linearity condition that holds for every $k$-ary
function  $f$ from $A$ to $B$.

\begin{exa}
 We illustrate Theorem~\ref{thm:Zn} for binary functions from $\A=(\Z_2,+)$ to an $\S$-module $\B$ such that $2$ a unit in $\S$.
 Then $F(A,B)^{(2)}$ is uniformly generated by unary $\A,\B$-minors $f(0,0)$, $f(x,0)$, $f(0,x)$, $f(x,x)$
 for $f \in F(A,B)^{(2)}$ via
 \begin{align*} f(x_1,x_2) = & f(0,0) \\
   &+2^{-1} \bigl[  f(x_1,0)+f(x_1+x_2,0)-f(0,0)-f(x_2,0) \\
     & +f(0,x_2)+f(0,x_1+x_2)-f(0,0)-f(0,x_1) \\
  & +f(x_1,x_1)+f(x_2,x_2)-f(0,0)-f(x_1+x_2,x_1+x_2) \bigr].
\end{align*}
 This identity can be derived from the interpolation arguments for Theorem~\ref{thm:Zn} below but is more elementary
 verified by the following case analysis.
 If $x_2=0$, then lines 1 and 2 on the right hand side add up to $f(x_1,0)$ while lines 3 and 4 each cancel.
 If $x_1=0$, then similarly lines 1 and 3 add up to $f(0,x_2)$ while lines 1 and 4 each cancel.
 Finally if $x_1=x_2$, then lines 1 and 4 add up to $f(x_1,x_2)$ while lines 2 and 3 each cancel.
 Hence in each case the right hand side of the formula yields $f(x_1,x_2)$ and the claim is proved.
\end{exa}

 Before proving Theorem~\ref{thm:Zn}, here is a brief outline of our strategy.
 By Lemma~\ref{lem:distributive} it will be enough to consider $\R$ as a direct product of finite local rings.
 We will then use induction on $\A$ to show that $F(A,B)^{(k)}$ is uniformly generated by $n$-ary $\A,\B$-minors. 
 By the induction assumption and Lemma~\ref{lem:uniform}\eqref{it:step} it will suffice to show that
\[ F_0(A,B)^{(k)} := \{f \in F(A,B)^{(k)} \st f(M^k) = 0 \text{ for all } M < \A \} \]
 is uniformly generated by $n$-ary $\A,\B$-minors. For that we will need some auxiliary interpolation results that
 are established in the following two lemmas.

 First we prove that under certain assumptions for any maximal submodule $M<\A$ every $f\in F_0(A,B)^{(k)}$
 generates a function $f'_M$ that is equal to $f$ on $A\times M^{k-1}$ and $0$ else.
 
\begin{lemma} \label{lem:gM}
 Let $\R$ be a finite direct product of finite local rings,
 let $\A$ be a finite distributive $\R$-module, and let $\B$ be an $\S$-module such that $|A|$ is invertible in $\S$.
 For a maximal $\R$-submodule $M$ of $\A$, $k\in\N$
 and $f\in F_0(A,B)^{(k)}$, define
\[f'_M\colon A^k\to B,\ x\mapsto\begin{cases} f(x) & \text{ if } x \in A\times M^{k-1}, \\ 0 & \text{ else}. \end{cases}\] 
 Then the operator $f\mapsto f'_M$ can be uniformly represented by $\A,\B$-minors on $F_0(A,B)^{(k)}$.
\end{lemma}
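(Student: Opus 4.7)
\emph{Proof plan.} I need coefficients $s\colon R^{k\times k}\to \S$ with finite support, depending only on $\A,\B,M,k$, such that
\[
  f'_M(x) \;=\; \sum_{T\in R^{k\times k}} s(T)\, f(Tx)
\]
for every $f \in F_0(A,B)^{(k)}$ and every $x \in A^k$. Because each $f \in F_0$ already vanishes on $\bigcup_{N<\A}N^k$, the functional $f \mapsto f'_M(x)$ vanishes on the same subspace of $B^{A^k}$, and this redundancy is exactly what permits a uniform representation to exist even though no term of $\Clo(\A)$ literally projects $A$ onto $M$.

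The plan is to exhibit $s$ by an explicit averaging construction, using the hypothesis that $|A|$ (hence $|A|^{k-1}$) is invertible in $\S$. As a prototype, if $\A$ is simple (so $M=0$) one verifies directly that
\[
  f'_M(x_1,\dots,x_k) \;=\; \frac{1}{|A|^{k-1}} \sum_{r\in R^{k-1}} \Bigl[ f\bigl(x_1 + \textstyle\sum_{i\ge 2} r_i x_i,\,0,\dots,0\bigr) \,-\, f\bigl(\textstyle\sum_{i\ge 2} r_i x_i,\,0,\dots,0\bigr) \Bigr]
\]
gives the required identity: if $x_2=\cdots=x_k=0$ the two arguments collapse and the average equals $f(x_1,0,\dots,0) - f(0,\dots,0) = f(x_1,0,\dots,0)$, while if some $x_i\ne 0$ then $\sum r_i x_i$ sweeps uniformly over the nontrivial submodule $\sum R x_i \le \A$, and the substitution $y \mapsto y+x_1$ makes the first inner sum agree with the second, so the bracketed expression cancels termwise.

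For the general distributive case I would first invoke Lemma~\ref{lem:distributive} to reduce to $\A$ a direct sum of uniserial modules over local rings, and then adapt the averaging formula to recover $f$ on all of $A\times M^{k-1}$ (not only on $A\times\{0\}^{k-1}$) by keeping $x_2,\dots,x_k$ as arguments of $f$, averaging the first argument over a set built from $R$-linear combinations of $x_2,\dots,x_k$, and retaining the two-term difference structure so that the vanishing $f(N^k)=0$ for every proper $N<\A$ annihilates the contribution from any $x$ with some $x_i \notin M$.

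\emph{Main obstacle.} The delicate step is the uniserial case of length greater than $1$ (e.g.\ $\A = \Z_{p^n}$ with $M = p\Z_{p^n}$): the naive average of $f(x_1+a,x_2,\dots,x_k)$ over $a \in M$ returns only the mean of $f$ over the coset $x_1+M$, not $f(x)$ itself. Isolating $f(x)$ pointwise at each $x \in A\times M^{k-1}$ with coefficients independent of $f$ is the core of the argument and will require a more refined combination of minors, respecting the Jacobson filtration $M \supseteq JM \supseteq J^2M \supseteq \cdots$ and exploiting the covering of $A^k$ provided by Lemma~\ref{lem:free}.
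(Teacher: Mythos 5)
Your base-case formula is essentially the paper's (up to the normalizing constant, which should be $|R_1|^{1-k}$ for $\R_1$ the local ring acting on the relevant uniserial summand, not $|A|^{1-k}$; these differ unless $\A$ is the regular module). But you have explicitly flagged, and not closed, the core of the argument: the uniserial case of length $>1$. That gap is genuine, and the direction you gesture at (``respecting the Jacobson filtration'' and ``exploiting the covering of Lemma~\ref{lem:free}'') is not what the paper does here; Lemma~\ref{lem:free} enters later, in Lemma~\ref{lem:fN} and Theorem~\ref{thm:Zn}, not in this lemma.

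The missing idea is an induction that averages over the \emph{minimal} nonzero submodule $N$ of the uniserial component $\A_1$ determining $M$ and then passes to the quotient $\A/N$. Concretely, one forms two averages: $\bar f(x) := |N|^{-k}\sum f(x+N^k)$, realized as a uniform $\A,\B$-minor by averaging over $I^{k\times k}$ for $I := \Ann_R(N)$-type coefficients (using that $Ix = 0$ for $x\in M$ and $Ix = N$ for $x\notin M$), and the partial average $\hat f$ that averages only in coordinates $2,\dots,k$ and hence agrees with $f$ on $A\times M^{k-1}$ and with $\bar f$ off it. Since $\bar f$ is constant on $N^k$-cosets, it descends to $\bar{\bar f}\in F_0(A/N,B)^{(k)}$ (crucially, $\bar f|_{M^k} = 0$ because $N\le M$ and $f|_{M^k}=0$), so the induction hypothesis for $\A/N$ with maximal submodule $M/N$ represents $\bar{\bar f}'_{M/N}$, hence $\bar f'_M$, by uniform minors. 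Then $f'_M = \hat f - (\bar f - \bar f'_M)$ finishes the step. Without this reduction the pointwise isolation of $f(x)$ on $A\times M^{k-1}$ for, say, $\A = \Z_{p^n}$ is exactly the unsolved part of your proposal, so the proof is incomplete as written.
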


\begin{proof}
 Assume $\R = \sum_{i=1}^{\ell} \R_i$ for finite local rings $\R_i$ for $i\in [\ell]$.
 By Lemma~\ref{lem:distributive} we have a direct decomposition $A = \sum_{i=1}^{\ell} A_i$ into uniserial $\R$-submodules
 $A_i$ for $i\in [\ell]$.
 Since $M$ is a maximal $\R$-submodule, we may assume that $M = J_1A_1+\sum_{i=2}^\ell A_i$ where $J_1$ is
 the greatest ideal of $\R_1$.
 In particular $A_1$ is nontrivial. It follows that $|A_1|$ and $|R_1|$ are both nontrivial powers of the
 characteristic of the field $\R_1/J$.
 Hence $|R_1|$ is invertible in $\S$ by the assumption on $|A|$.

 We use induction on the number of submodules of $\A_1$.
 
 For the base case, $J_1=0$ and $\A_1$ is a $1$-dimensional vector space over the field $\R_1$.
 Let $e$ be the multiplicative identity of $\sum_{i=2}^{\ell} R_i$.
 Let $f\in F_0(A,B)^{(k)}$.
 We claim that for all $x_1,\dots,x_k\in A$  
\begin{equation} \label{eq:uM1}
 \begin{array}{ll} f'_M(x_1,\dots,x_k) = & |R_1|^{1-k} \left(\sum_{a_2,\dots,a_k\in R_1} f(x_1+\sum_{i=2}^k a_i x_i,x_2,\dots,x_k) \right. \\
    & \left. - \sum_{a_2,\dots,a_k\in R_1} f(ex_1+\sum_{i=2}^k a_i x_i,x_2,\dots,x_k)\right).  \end{array}
\end{equation}
Note that the right hand side is welldefined since $|R_1|$ is invertible in $\S$.
 If $x_2,\dots,x_k\in M$, then~\eqref{eq:uM1} follows since $\sum_{i=2}^k a_i x_i = 0$ and the right hand side simplifies to
\[ f(x_1,x_2,\dots,x_k) - \underbrace{f(ex_1,x_2,\dots,x_k)}_{=0 \text{ since } f(M^k)=0} = f(x_1,x_2,\dots,x_k). \]
Else if $(x_2,\dots,x_k)\not\in M^{k-1}$, then
 the maximality of $M$ in $\A$ implies
\[ \left\{ \sum_{i=2}^k a_i x_i \st a_2,\dots,a_k\in R_1 \right\} = \sum_{i=2}^k R_1 x_i = A_1 \]
 where each element in $A_1$ is attained with the same multiplicity $\frac{|R_1|^{k-1}}{|A_1|}$. 
 So the right hand side of~\eqref{eq:uM1} yields
\[ |R_1|^{-1}\left(\sum f(x_1+A_1,x_2,\dots,x_k) - \sum f(\underbrace{ex_1+A_1}_{=x_1+A_1 \text{ since } A=A_1\times M},x_2,\dots,x_k)\right) = 0. \]
This shows~\eqref{eq:uM1} and the base case of the induction.

 For the induction step let $M = J_1A_1+\sum_{i=2}^\ell A_i$ and $J_1A_1 \neq 0$ in the following.
 Let $N$ be the smallest $\R$-submodule of $\A_1$. Then $N\leq M$ and $N = IA_1$ for $I$ a power of $J_1$.
 In order to apply the induction assumption to $\A/N$, we will take averages of functions over arguments $\sum Ix$. 
 For this we note that by the minimality of $N$,
\begin{equation*} 
  Ix = \begin{cases} 0 & \text{for all } x\in M, \\
  N & \text{for all } x\in A\setminus M. \end{cases}
\end{equation*}
 First we claim that 
\[ \bar{f}(x_1,\dots,x_k) := |I|^{-k^2} \sum_{a_1,\dots,a_k \in I^k} f(x_1 + \sum_{j=1}^k a_{1j} x_j, x_2 + \sum_{j=1}^k a_{2j} x_j,
 \dots, x_k + \sum_{j=1}^k a_{kj} x_j) \]
 satisfies
 \[ \bar{f}(x) = |N|^{-k} \sum f(x+N^k) \text{ for all } x\in A^k. \]
 For $x\in M^k$, this follows from the assumption that $f(M^k) = 0$ and $N\subseteq M$.
 Else if $x\not\in M^k$, it follows from 
\[ \left\{\sum_{j=1}^k a_{ij} x_j \st a_i\in I^k \right\} = \sum_{j=1}^k Ix_j = N, \]
 where each element in $N$ is attained with the same multiplicity $\frac{|I|^{k}}{|N|}$. 
 Since $\bar{f}$ is constant on blocks modulo $N$,
 it naturally induces the map
 \[\bar{\bar{f}} \colon (\A/N)^k \rightarrow \B, \ x+N^k \mapsto \bar{f}(x), \]
 in $F_0(A/N,B)^{(k)}$. 
 By the induction assumption on $\A/N$ with maximal submodule $M/N$, we have 
 $s\colon \{r\in R^{k\times k} \st \rk(r) \leq n \} \to S$ such that for all $g\in F_0(A/N,B)^{(k)}$ and all $x\in A^k$
\[ g'_{M/N}(x+N^k) = \sum_{r\in R^{k\times k}, \rk(r)\leq n} s(r) g(rx+N^k). \]
 In particular, for $f\in F_0(A,B)^{(k)}$, we have
\[ \bar{\bar{f}}'_{M/N}(x+N^k) = \sum_{r\in R^{k\times k}, \rk(r)\leq n} s(r) \bar{\bar{f}}(rx+N^k), \]
 equivalently
\begin{equation} \label{eq:bf'M}
 \bar{f}'_{M}(x) = \sum_{r\in R^{k\times k}, \rk(r)\leq n} s(r) \bar{f}(rx)
\end{equation}
 for all $x\in A^k$.
 By its definition, the operator $f\mapsto\bar{f}$ is uniformly represented by $\A,\B$-minors on $F_0(A,B)^{(k)}$.
 Hence so is $f\mapsto\bar{f}'_{M}$ by~\eqref{eq:bf'M}.
 
 Similarly as for $\bar{f}$ we see that 
\begin{align*} 
\hat{f}(x_1,\dots,x_k) := |I|^{-k(k-1)} \sum_{a_1,\dots,a_k \in I^{k-1}} f(x_1 + \sum_{j=2}^k a_{1,j-1} x_j, x_2 + \sum_{j=2}^k a_{2,j-1} x_j, \\
 \dots, x_k + \sum_{j=2}^k a_{k,j-1} x_j) 
 \end{align*}
 satisfies
 \[ \hat{f}(x) = \begin{cases} f(x)  & \text{if } x\in A\times M^{k-1}, \\
     |N|^{-k} \sum f(x+N^k) = \bar{f}(x) & \text{else}. \end{cases} \]
 Since $f'_M = \hat{f}-(\bar{f}-\bar{f}'_M)$ and the operators $f\mapsto \hat{f}, \bar{f},\bar{f}'_M$, respectively,
 are uniformly represented by $\A,\B$-minors on $F_0(A,B)^{(k)}$, so is $f\mapsto f'_M$.
 Hence the induction step and the lemma is proved. 
\end{proof}  

 Next we show that for $n$ the nilpotence degree of the Jacobson radical $J$ of $\R$, under certain conditions
 $n$-ary $\A,\B$-minors of $f$ generate functions that are equal to $f$ on any submodule
 $N$ isomorphic to $A\times (JA)^{k-1}$ and $0$ else.
 
\begin{lemma} \label{lem:fN}
 Let $\R$ be a finite direct product of finite local rings with $J := J(\R)$ and $n\in\N$ such that $J^n=0$,
 let $\A$ be a finite distributive $\R$-module, let $\B$ be an $\S$-module such that $|A|$ is invertible in $\S$,
 and let $k\in\N$.
 Assume $J=0$ or that $F(JA,B)^{(k-1)}$ is uniformly generated by $n-1$-ary $JA,\B$-minors.
 For $(JA)^k \le N \le \A^k$ such that $N/(JA)^k \cong \A/JA$ and $f\in F_0(A,B)^{(k)}$, let
\[ f_N\colon A^k\to B,\ x\mapsto \begin{cases} f(x) & \text{if } x\in N, \\
    0 & \text{else}. \end{cases} \]
 Then the operator $f\mapsto f_N$ can be uniformly represented by $n$-ary $\A,\B$-minors.
\end{lemma}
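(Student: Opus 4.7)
The plan is to reduce to the canonical case $N = N_0 := A\times (JA)^{k-1}$, construct a function $h\in F_0(A,B)^{(k)}$ that agrees with $f$ on $N_0$ and is built from $n$-ary $\A,\B$-minors of $f$, and then apply Lemma~\ref{lem:gM} iteratively to restrict the support of $h$ to exactly $N_0$.

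For the reduction, pick $T\in\GL_k(\R)$ with $TN=N_0$ by Lemma~\ref{lem:free}\eqref{it:transitive}. For every $\R$-submodule $L\leq\A$ the set $L^k$ is $T$-invariant (since $L$ is closed under the $R$-action, both $T$ and $T^{-1}$ send $L^k$ into itself), hence $\tilde f := f\circ T^{-1}$ is again in $F_0(A,B)^{(k)}$ and a direct computation gives $f_N(x)=\tilde f_{N_0}(Tx)$. Since $\rk(T^{-1}rT) = \rk(r)$ for invertible $T$, any uniform $n$-ary representation of $\tilde f_{N_0}$ in terms of $\tilde f$ translates to one for $f_N$ in terms of $f$, so I may assume $N=N_0$.

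Using the hypothesis (or, if $J=0$, trivially with $p=1$, $b_1=0$, $s_1=1$) pick $b_1,\dots,b_p\in R^{(k-1)\times (k-1)}$ with $\rk(b_j)\leq n-1$ and $s_1,\dots,s_p\in S$ satisfying $\phi(y)=\sum_j s_j\phi(b_jy)$ for every $\phi\in F(JA,B)^{(k-1)}$ and every $y\in (JA)^{k-1}$, and define
\[ h(x) := \sum_{j=1}^p s_j\, f\bigl(x_1,\, b_j(x_2,\dots,x_k)\bigr). \]
Each summand is an $\A,\B$-minor of $f$ whose matrix is the block matrix with $1$ in the upper-left and $b_j$ in the lower-right, of inner rank at most $1+\rk(b_j)\leq n$ by block subadditivity. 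Applying the uniform formula to the function $(JA)^{k-1}\to B,\ y\mapsto f(x_1,y)$, gives $h(x)=f(x)$ for $x\in N_0$. Since every proper submodule $L<\A$ is closed under the $R$-action, $h(L^k)\subseteq \{0\}$, so $h\in F_0(A,B)^{(k)}$.

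To finish, enumerate the maximal $\R$-submodules of $\A$ as $M_1,\dots,M_\ell$; using $\R=\prod\R_i$ and Lemma~\ref{lem:distributive} one computes $\bigcap_{j=1}^\ell M_j = JA$. Set $h_0:=h$ and iteratively $h_i:=(h_{i-1})'_{M_i}\in F_0(A,B)^{(k)}$ by Lemma~\ref{lem:gM}. Then $h_\ell$ equals $h$ on $A\times (\bigcap_j M_j)^{k-1} = N_0$ and vanishes elsewhere, hence $h_\ell = f_{N_0}$. At each step Lemma~\ref{lem:gM} writes $h_i$ as a fixed $\S$-linear combination of $h_{i-1}$-values at $R$-linear transforms of $x$; composing with the inductive representation $h_{i-1}=\sum_r c_r f(rx)$ with $\rk(r)\leq n$ yields minors of $f$ at matrices $rT$, which still have inner rank $\leq n$ since $\rk(rT)\leq\rk(r)$. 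The main subtlety is exactly this arity bookkeeping through the potentially intricate formulas of Lemma~\ref{lem:gM}: submultiplicativity of inner rank ensures the bound $n$ survives every composition, while the uniformity of every ingredient (the $b_j,s_j$, the list of $M_i$, and Lemma~\ref{lem:gM}'s formulas) gives a uniform-in-$f$ final representation.
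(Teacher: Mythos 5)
Your proposal is correct and follows essentially the same route as the paper's own proof: reduce to the canonical case $N_0 = A\times(JA)^{k-1}$ via an invertible matrix $T$ from Lemma~\ref{lem:free}\eqref{it:transitive}, build an auxiliary function agreeing with $f$ on $N_0$ from the $(n-1)$-ary generation hypothesis on $F(JA,B)^{(k-1)}$ using a $1\oplus b_j$ block-diagonal matrix, and then restrict the support to $N_0$ by iterating Lemma~\ref{lem:gM} over all maximal submodules (whose intersection is $JA$), with submultiplicativity of inner rank ensuring the $n$-ary bound survives each composition. The only differences from the paper are presentational: you perform the $N\to N_0$ reduction first while the paper does it last, and you spell out the $T$-invariance of $L^k$ and membership of the intermediate functions in $F_0(A,B)^{(k)}$ a bit more explicitly; one very small slip is that the matrices arising from composing the Lemma~\ref{lem:gM} representation $h_i = \sum_u t(u)h_{i-1}(u\,\cdot)$ with $h_{i-1}=\sum_r c_r f(r\,\cdot)$ are $ru$ (not $rT$), but the rank bound you invoke still applies.
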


\begin{proof}  
 First we show that there exists an operator $f\mapsto \hat{f}$ that is uniformly represented by $n$-ary $\A,\B$-minors
 such that $f$ and $\hat{f}$ are equal on $N = A\times (JA)^{k-1}$.

 If $n=1$ and $J=0$, simply let $\hat{f}(x_1,\dots,x_k) := f(x_1,0,\dots,0)$.
 
 Else assume $J>0$. For $f\in F_0(A,B)^{(k)}$ and $x_1\in A$, define
\[ f_{x_1}\colon A^{k-1} \to B,\ (x_2,\dots,x_k) \mapsto f(x_1,x_2,\dots,x_k). \]
 By assumption the restrictions of $f_{x_1}$ to $(JA)^{k-1}$ can be uniformly generated by $n-1$-ary $JA,\B$-minors.
 That is we have $s\colon \{r\in R^{k-1\times k-1} \st \rk(r) \leq n-1 \} \to S$
 such that
\[ f(x_1,x_2,\dots,x_k) = \sum_{r\in R^{(k-1)\times(k-1)}, \rk(r)\leq n-1} s(r) f(x_1,r(x_2,\dots x_k))\] 
 for all $x_1\in A, x_2,\dots,x_k\in JA.$ Here $(x_1,r(x_2,\dots,x_k))$ denotes the concatenation of 
 the $1$-tuple $x_1$ and the $k-1$-tuple $r(x_2,\dots,x_k)$. For $r\in R^{k-1\times k-1}$ the $k\times k$ block diagonal matrix
 $r' := \begin{pmatrix} 1 & 0 \\ 0 & r \end{pmatrix}$ has inner rank at most $\rk(r)+1$ and satisfies
 $r'(x_1,\dots,x_k) = (x_1,r(x_2,\dots,x_k))$. Then
\[ \hat{f}(x) := \sum_{r\in R^{(k-1)\times(k-1)}, \rk(r)\leq n-1} s(r) f(r' x) \]
 is equal to $f$ on $A\times (JA)^k$. By construction the operator $f\mapsto \hat{f}$ is uniformly represented by $n$-ary
 $\A,\B$-minors on $F_0(A,B)^{(k)}$. 

 Note that $JA$ is the intersection of all maximal submodules $M$ of $\A$.
 Hence applying Lemma~\ref{lem:gM} iteratively for all maximal $M$ to $\hat{f}$ yields that 
 $f\mapsto \hat{f} \mapsto \hat{f}_N$, which is $f_N$, can be uniformly represented by $n$-ary $\A,\B$-minors on
 $F_0(A,B)^{(k)}$. The lemma is proved for $N=A\times (JA)^{k-1}$.

 Finally let $N$ be arbitrary. By Lemma~\ref{lem:free}\eqref{it:transitive} we have some invertible $k\times k$-matrix $T$
 over $\R$ such that $TN=A\times (JA)^{k-1}$. For $f\in F_0(A,B)^{(k)}$, define
\[f'\colon A^k \to B, \ x \mapsto \begin{cases}f(T^{-1}x) & \text{ if } x\in A\times JA^{k-1}, \\
      0 & \text{ else.} \end{cases}\]
 By the proof above, the operator $f\mapsto f'$ can be uniformly represented by $n$-ary $\A,\B$-minors
 on $F_0(A,B)^{(k)}$.
 Since $f_N(x) = f'(Tx)$ for all $x\in A^k$, it follows that $f\mapsto f'\mapsto f_N$ is uniformly represented 
 by $n$-ary $\A,\B$-minors on $F_0(A,B)^{(k)}$ as well.
\end{proof}

 We are now ready to prove the main result of this section. 

\begin{proof}[Proof of Theorem~\ref{thm:Zn}]
 First we assume $\R$ is a finite direct product of finite local rings and prove the result for this case by induction on $A$. 

 Let $k\in\N$. The base case $A=0$ is trivial since all functions in $F(A,B)^{(k)}$ are constant.
 Assume $A\neq 0$.  
 By the induction assumption applied to proper submodules $M$ of $\A$ and Lemma~\ref{lem:uniform}\eqref{it:step}
 it suffices to prove that
\[ F_0(A,B)^{(k)} =  \{f \in F(A,B)^{(k)} \st f(M^k) = 0 \text{ for all } M<\A \} \]
 is uniformly generated by $n$-ary $\A,\B$-minors.

 For this let
\[ V := \{ N\leq\A^k \st (JA)^k\leq N, N/(JA)^k \cong \A/JA \} \]  
 and let $N\in V$.
 Note that if $J>0$, then $J$ has a nontrivial annihilator $I$ in $\R$ and the Jacobson radical of $\R/I$ has nilpotence
 degree $n-1$. Further $JA$ is an $\R/I$-module and $F(JA,B)^{(k)}$ is uniformly generated by $n-1$-ary $\A,\B$-minors
 by the induction assumption.
 So regardless whether $J=0$ or $J>0$, Lemma~\ref{lem:fN} yields that the operator $f\mapsto f_N$ is uniformly generated by
 $n$-ary $\A,\B$-minors on $F_0(A,B)^{(k)}$. The same holds for $f\mapsto \sum_{N\in V} f_N$.
 Since the support of the functions $f_N$ partitions the support of $f\in F_0(A,B)^{(k)}$ by
 Lemma~\ref{lem:free}\eqref{it:cap},\eqref{it:cup},
 we have
\[ f = \sum_{N\in V} f_N. \]
Thus $F_0(A,B)^{(k)}$ is uniformly generated by $n$-ary $\A,\B$-minors.
 The theorem is proved for $\R$ a direct product of local rings.
 
 Next we extend the result to distributive modules $\A$ over arbitrary $\R$. 
 By Lemma~\ref{lem:distributive} we see that $\A$ is also a distributive module over a subring $\R'$ of $\R$,
 which is a finite direct product of finite local rings.
 Let $k\in\N$. Then from the result for distributive $\R'$-modules above we have a function
 $s' \colon \{r \in (\R')^{k\times k} \st \rk(r) \le n\} \rightarrow S$,
 such that for each $f \in F(A,B)^k$ and for all $x \in A^k$
 \[ f(x) = \sum_{r \in (R')^{k \times k}, \rk(r) \le n} s'(r)f(rx). \]
 Clearly $s'$ can be extended to a function $s\colon\{r \in \R^{k \times k} \st \rk(r) \le n\}\to S$, by setting $s$ to be
 $0$ outside of the domain of $s'$. 
 Then for each $f \in F(A,B)^k$ and for all $x \in A^k$
  \[ f(x) = \sum_{r \in R^{k \times k}, \rk(r) \le n} s(r)f(rx). \]
  Thus the result is proved for arbitrary $\R$.
\end{proof}

 Note that the assumptions of Lemma~\ref{lem:fN} are satisfied by (the induction hypothesis in the proof of)
 Theorem~\ref{thm:Zn}. Hence we obtain the following interpolation result that is of independent interest.

\setcounter{theorem}{4} 
\begin{cor} \label{cor:fN}
Let $\R$ be a finite direct product of finite local rings with
 $J := J(\R)$ and $n\in\N$ such that $J^n=0$,
 let $\A$ be a finite distributive $\R$-module, and 
 let $\B$ be an $\S$-module such that $|A|$ is invertible in $\S$.
 Let $k\in\N$ and let $(JA)^k \le N \le \A^k$ such that $N/(JA)^k \cong \A/JA$. 

 Then there exists $s\colon \{r\in R^{k\times k} \st \rk(r) \leq n \} \to S$ such that
 for every $f\colon A^k\to B$ with $f(M^k) = 0$ on all $M<\A$, the function
\[ f_N\colon A^k\to B,\ x\mapsto \begin{cases} f(x) & \text{if } x\in N, \\
     0 & \text{else}, \end{cases} \]
 satisfies 
\begin{equation*} 
  f_N(x) = \sum_{r\in R^{k\times k}, \rk(r)\leq n} s(r) f(rx) \text{ for all } x\in A^k.
\end{equation*}
\end{cor}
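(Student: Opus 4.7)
The plan is to deduce the corollary directly from Lemma~\ref{lem:fN} by verifying its only nontrivial hypothesis. All of the setup of Lemma~\ref{lem:fN}---$\R$ a finite direct product of finite local rings, $\A$ a finite distributive $\R$-module, $|A|$ invertible in $\S$, and the functions under consideration lying in $F_0(A,B)^{(k)}$---is built into the corollary's statement. Thus I need only check that either $J=0$ or $F(JA,B)^{(k-1)}$ is uniformly generated by $(n-1)$-ary $JA,\B$-minors.

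If $J=0$ there is nothing to verify, so assume $J\ne 0$. The plan is to apply Theorem~\ref{thm:Zn} to $JA$ in place of $\A$, with $n-1$ in place of $n$ and arity $k-1$ in place of $k$. To justify this I would establish three points: first, $JA$ is a distributive module because its submodule lattice is an interval of the distributive submodule lattice of $\A$; second, the $\R$-action on $JA$ factors through $\R':=\R/\Ann_R(JA)$, which is again a finite direct product of finite local rings, and its Jacobson radical has nilpotence degree at most $n-1$ since $J(\R)^{n-1}\cdot JA = J(\R)^n A = 0$ forces $J(\R)^{n-1}\subseteq \Ann_R(JA)$; third, $|JA|$ divides $|A|$ and hence is invertible in $\S$. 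Theorem~\ref{thm:Zn} then furnishes a coefficient function $s'$ supported on $\R'$-matrices of inner rank at most $n-1$ that witnesses uniform generation of $F(JA,B)^{(k-1)}$. Each such matrix can be lifted to an $\R$-matrix of no greater inner rank (lift any rank-witnessing factorization over $\R'$ to $\R$), and transporting the weights of $s'$ to chosen lifts (and setting $s$ to $0$ elsewhere) yields uniform generation over $\R^{(k-1)\times(k-1)}$-matrices of inner rank at most $n-1$, which is precisely the hypothesis demanded by Lemma~\ref{lem:fN}.

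With the hypothesis verified, Lemma~\ref{lem:fN} supplies a uniform representation of the operator $f\mapsto f_N$ by $n$-ary $\A,\B$-minors on $F_0(A,B)^{(k)}$. Converting this representation into coefficient form via Lemma~\ref{lem:uniform}\eqref{it:uniform} extracts the sought $s\colon\{r\in R^{k\times k}\st \rk(r)\le n\}\to S$ and yields the identity $f_N(x)=\sum_r s(r)\,f(rx)$. The only potential obstacle worth flagging is an appearance of circularity: Lemma~\ref{lem:fN} was itself used inside the induction step of Theorem~\ref{thm:Zn}. However, here Theorem~\ref{thm:Zn} is invoked in its fully proved form and applied to the \emph{strictly smaller} module $JA$, so the argument is sound and the corollary is simply an extraction of what the proof of Theorem~\ref{thm:Zn} has already established.
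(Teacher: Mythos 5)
Your proof is correct and follows the paper's intended route. The paper disposes of this corollary in a single remark---``the assumptions of Lemma~\ref{lem:fN} are satisfied by (the induction hypothesis in the proof of) Theorem~\ref{thm:Zn}''---and you have simply unpacked that remark: you verify the hypothesis of Lemma~\ref{lem:fN} by applying the now-proved Theorem~\ref{thm:Zn} to the strictly smaller module $JA$ over $\R/\Ann_R(JA)$, rather than appealing to the induction hypothesis inside the proof of Theorem~\ref{thm:Zn}, and you correctly flag and dismiss the apparent circularity.
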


 Theorem~\ref{thm:distributive} for modules is an easy consequence of Theorem~\ref{thm:Zn}.

\begin{proof}[Proof of Theorem \ref{thm:distributive} for modules $\A,\B$] 
 Every clonoid from $\A$ to $\B$ is generated by its $n$-ary functions by Theorem~\ref{thm:Zn}.
 If $\B$ is finite, then there are finitely many $n$-ary functions from $A$ to $B$ and so there are finitely many
 clonoids from $\A$ to $\B$.
\end{proof}

%%% %%%%%%%%%%%%           Section  4 : Clonoids from Mal'cev Algebraas       %%%%%%%%%%%%%%%

\section{Clonoids from distributive abelian Mal'cev algebras}  \label{sec:abelian}

 Extending Theorem~\ref{thm:Zn} we show that every function from a finite abelian Mal'cev algebra $\A$ with distributive
 congruence lattice into some coprime abelian Mal'cev algebra $\B$ is generated by its $n+1$-ary $\A,\B$-minors,
 where $n$ is the nilpotence degree of the Jacobson radical of the ring $\R_\A$ (see the definition in Lemma~\ref{lem:abelian}).
 The increase in the arity compared to the result for modules is due to the fact that we have to compensate for the
 missing constant term function $0$ with a projection, i.e., an additional variable $z$.
 More precisely we have the following
 (recall the definitions for $+_b,*_b,*_z$ from  Lemma~\ref{lem:abelian} and the comments afterwards).

\begin{theorem} \label{thm:abelian}
 Let $\A$ be a finite abelian Mal'cev algebra with distributive congruence lattice,
 let $n\in\N$ be the nilpotence degree of the Jacobson radical of $\R_\A$,
 and let $\B$ be an abelian Mal'cev algebra such that $|A|$ is invertible in $\R_\B$.
  
 For all $k\in\N$ there exists $s\colon \{r\in R_\A^{k\times k} \st \rk(r) \leq n \} \to R_\B$ such that
 for all $f,b\colon A^{k+1}\to B$ and all $x\in A^k,z\in A$
\begin{equation} \label{eq:fabelian}
  f(x,z) = \sum_{r\in R_\A^{k\times k}, \rk(r)\leq n} s(r)*_{b(x,z)} f(r*_zx,z)
\end{equation}
 where the sum is taken pointwise with respect to $+_{b(x,z)}$ in $\B$.
\end{theorem}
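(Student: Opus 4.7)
The plan is to reduce to Theorem~\ref{thm:Zn} by fixing base points $0\in A$ and $0\in B$ and then transporting the resulting linear identity via the module isomorphisms from Lemma~\ref{lem:abelian}\eqref{it:iso}. By Lemma~\ref{lem:abelian}\eqref{it:poly}, $\A_0$ is an $\R_\A$-module polynomially equivalent to $\A$, so its submodule lattice coincides with the congruence lattice of $\A$ and is therefore distributive. Since $n$ is the nilpotence degree of $J(\R_\A)$ and $|A|$ is invertible in $\R_\B$, the pair $(\A_0,\B_0)$ satisfies the hypotheses of Theorem~\ref{thm:Zn}. I would use this to produce a function $s\colon \{r\in R_\A^{k\times k}\st\rk(r)\le n\}\to R_\B$ such that for every $g\colon A^k\to B$ and every $y\in A^k$
\[ g(y) \;=\; \sum_{r\in R_\A^{k\times k},\,\rk(r)\le n} s(r) *_0 g(r *_0 y), \]
with the sum taken pointwise with respect to $+_0$ in $\B_0$. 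My aim is then to show that this same $s$ already witnesses~\eqref{eq:fabelian}.

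Next I would fix arbitrary $z\in A$ and $c\in B$, and let $\phi\colon \A_0\to\A_z$ with $\phi(a):=m(a,0,z)$ and $\psi\colon\B_0\to\B_c$ with $\psi(b):=m(b,0,c)$ be the module isomorphisms provided by Lemma~\ref{lem:abelian}\eqref{it:iso}. A short computation from the definitions of $*_0$ and $*_z$ recalled after Lemma~\ref{lem:abelian} verifies that $\phi^k$ intertwines the matrix actions:
\[ \phi^k(r*_0 y) \;=\; r *_z \phi^k(y) \quad\text{for all } r\in R_\A^{k\times k},\ y\in A^k. \]
For arbitrary $g\colon A^k\to B$, I would apply the identity of the previous paragraph to $h(y):=\psi^{-1}(g(\phi^k(y)))$, push the resulting equality through $\psi$ (which transforms $+_0,*_0$ in $\B_0$ into $+_c,*_c$ in $\B_c$ because it is an $\R_\B$-module isomorphism), and substitute $y=\phi^{-k}(x)$. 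Using the intertwining, this yields
\[ g(x) \;=\; \sum_{r\in R_\A^{k\times k},\,\rk(r)\le n} s(r) *_c g(r *_z x) \quad\text{in } \B_c, \]
valid for every $g\colon A^k\to B$, every $x\in A^k$, and every choice of $z\in A$, $c\in B$.

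To finish, given $f,b\colon A^{k+1}\to B$, $x\in A^k$, and $z\in A$, I would set $c:=b(x,z)$ and apply the identity just obtained to $g(y):=f(y,z)$; this immediately produces~\eqref{eq:fabelian}. The hardest part will be the bookkeeping in the middle step: keeping straight the simultaneously varying module structures on $\A_0$ versus $\A_z$ and on $\B_0$ versus $\B_c$, and carefully deriving the matrix-valued intertwining $\phi^k(r*_0 y)=r*_z\phi^k(y)$ from the scalar intertwining that is immediate from $\phi$ being an $\R_\A$-module isomorphism. Once that is settled, the whole argument is just a transport of Theorem~\ref{thm:Zn} along module isomorphisms, and the fact that $b$ is allowed to be an arbitrary function becomes automatic because the transported identity already holds for every fixed $c\in B$.
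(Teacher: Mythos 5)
Your proposal is correct and follows essentially the same route as the paper's proof: apply Theorem~\ref{thm:Zn} at fixed base points to obtain $s$, transport the resulting identity along the module isomorphisms from Lemma~\ref{lem:abelian}\eqref{it:iso} to show it holds with $*_z$ and $*_c$ for every $z\in A$ and $c\in B$, and then choose $c = b(x,z)$ pointwise. The only cosmetic difference is that you conjugate by both $\phi$ and $\psi$ in a single step while the paper does the two transports sequentially.
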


\begin{proof}
 Let $k\in\N, a\in A, b\in B$.
 Throughout this proof every sum $\Sigma$ is taken with respect to $+_b$ of $\B_b$ except in two instances
 which we will point out below.
 Since $\A$ and the $\R_\A$-module $\A_a$ are polynomially equivalent by
 Lemma~\ref{lem:abelian}\eqref{it:poly}, they have the same congruences.
 Since the congruence lattice of $\A$ is distributive by assumption, so is the congruence lattice of $\A_a$.
 Now the distributive $\R_\A$-module $\A_a$ and the $\R_\B$-module $\B_b$ satisfy the assumptions of Theorem~\ref{thm:Zn}.
 Thus we have some $s\colon \{r\in R_\A^{k\times k} \st \rk(r) \leq n \} \to R_\B$ 
 such that for all $g\colon A^{k}\to B$ and all $x\in A$
\begin{equation} \label{eq:gAa}
 g(x) = \sum_{r\in R_\A^{k\times k}, \rk(r)\leq n} s(r) *_b g(r*_ax).
\end{equation}
 Note that this formula depends on the constants $a,b$ via the action $*_a$ of $R_\A^{k\times k}$ on $A^k$
 and the underlying addition $u+_av = m(u,a,v)$ of $\A_a$ as well as the operations $*_b,+_b$ of $\B_b$ for the outer sum.
 Moreover, the function $s$ may a priori depend on the choice of $a$ and $b$.

 We claim that $s$ is actually independent of $a,b$ since the modules $\A_a,\B_b$ are isomorphic for all $a\in A, b\in B$,
 respectively.
 To see that, first let $g\colon A^{k}\to B$, $z\in A$ and $p\colon \A_a^k\to\A_z^k$ be the $\R_\A$-module isomorphism
 which exists by Lemma~\ref{lem:abelian}\eqref{it:iso}.
 Setting $g$ to $gp$ in~\eqref{eq:gAa} and using that $p$ is a homomorphism, we obtain for all $x\in A^k$
\[
 gp(x) = \sum_{r\in R_\A^{k\times k}, \rk(r)\leq n} s(r) *_b gp(r*_ax) = \sum_{r\in R_\A^{k\times k}, \rk(r)\leq n} s(r) *_b g(r*_zp(x)).
\]  
 Since $p$ is a bijection on $A^k$, this yields
\begin{equation} \label{eq:gAz}
 g(x) = \sum_{r\in R_\A^{k\times k}, \rk(r)\leq n} s(r) *_b g(r*_zx)
\end{equation}
 for all $x\in A^k$. 
 
 For $f\colon A^{k+1}\to B$ and $z\in A$, we set $g(x_1,\dots,x_k) := f(x_1,\dots,x_k,z)$ in~\eqref{eq:gAz}
 to obtain
\begin{equation} \label{eq:fxz}
 f(x,z) = \sum_{r\in R_\A^{k\times k}, \rk(r)\leq n} s(r)*_{b} f(r*_zx,z)
\end{equation}  
 for all $x\in A^k, z\in A$.
 
 Next let $c\in B$ and let $h\colon \B_c\to\B_b$ be the $\R_\B$-module isomorphism given by
 Lemma~\ref{lem:abelian}\eqref{it:iso}. Setting $f$ to $hf$ in~\eqref{eq:fxz} and using that $h$ is a homomorphism,
 we obtain for all $x\in A^k,z\in A$ that
\begin{align*}
 hf(x,z) & = \sum_{r\in R_\A^{k\times k}, \rk(r)\leq n} s(r)*_{b} hf(r*_zx,z) \\ & = h\left(\sum_{r\in R_\A^{k\times k}, \rk(r)\leq n} s(r)*_{c} f(r*_zx,z)\right).
 \end{align*}
 Note that here the first $\Sigma$ outside of $h$ uses $+_b$ of $\B_b$ and the second $\Sigma$ inside of $h$ uses $+_c$
 in $\B_c$.
 Since $h$ is a bijection on $B$, it follows that
\begin{equation} \label{eq:fxzc}
 f(x,z) = \sum_{r\in R_\A^{k\times k}, \rk(r)\leq n} s(r)*_{c} f(r*_zx,z)
\end{equation}  
for all $x\in A^k,z\in A, c\in B$ where $\Sigma$ again uses $+_c$.
 In particular by choosing $c = b(x,z)$ independently for any point $x\in A^k,z\in A$ in equation~\eqref{eq:fxzc},
 we obtain~\eqref{eq:fabelian}.
\end{proof}

 Theorem~\ref{thm:distributive} is an easy consequence of Theorem~\ref{thm:abelian}.

\begin{proof}[Proof of Theorem \ref{thm:distributive}]
 Let $\A$ be polynomially equivalent to a distributive $\R_\A$-module with $n$ the nilpotence degree of $J(\R_\A)$,
 let $\B$ be polynomially equivalent to an $\R_\B$-module such that $|A|$ is invertible in $\R_\B$.  
 Then every clonoid from $\A$ to $\B$ is generated by its $n+1$-ary functions by Theorem~\ref{thm:abelian}. 

 Hence, if $\B$ is finite, then there are only finitely many clonoids from $\A$ to $\B$. 
\end{proof}

 We point out one important special case of Theorem~\ref{thm:distributive}.

\begin{cor}
 Let $\A = \A_1\times\dots\times\A_\ell$ 
 for pairwise non-isomorphic finite abelian simple Mal'cev algebras $\A_1,\dots,\A_\ell$
 and let $\B$ be a finite abelian Mal'cev algebra such that $|A|$ and $|B|$ are coprime.

 Then every clonoid from $\A$ to $\B$ is generated by binary functions.
\end{cor}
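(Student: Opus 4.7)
The plan is to verify the hypotheses of Theorem~\ref{thm:distributive} with nilpotence degree $n = 1$, since then $n+1 = 2$ delivers the conclusion that every clonoid from $\A$ to $\B$ is generated by binary functions. Concretely I need to check (a) $\A$ is polynomially equivalent to a finite distributive module over a ring with trivial Jacobson radical, and (b) $|A|$ is invertible in $\R_\B$.

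For (a), fix $0 \in A$ and consider the $\R_\A$-module $\A_0$ supplied by Lemma~\ref{lem:abelian}\eqref{it:poly}. Since $\A$ and $\A_0$ are polynomially equivalent, the submodule lattice of $\A_0$ equals the congruence lattice of $\A$. By a standard subdirect-product argument (ruling out skew congruences between non-isomorphic simple factors), pairwise non-isomorphism of the simple Mal'cev algebras $\A_i$ forces this lattice to be Boolean, with atoms the simple $\R_\A$-submodules $(\A_i)_{0_i}$. Consequently $\A_0 = \bigoplus_{i=1}^{\ell}(\A_i)_{0_i}$ is a distributive, and in fact semisimple, $\R_\A$-module. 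Since $J(\R_\A)$ annihilates every semisimple module and $\R_\A$ acts faithfully on $\A_0$ by Lemma~\ref{lem:abelian}, we conclude $J(\R_\A) = 0$, i.e., $n = 1$.

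For (b), view $\B$ as polynomially equivalent to the $\R_\B$-module $\B_0$ for some $0 \in B$. The additive order of $1_{\R_\B}$ in the finite ring $\R_\B$ equals the exponent of the abelian group $(\B_0, +_0)$, which divides $|B|$. Because $\gcd(|A|, |B|) = 1$, multiplication by $|A|$ has trivial kernel on the additive group of $\R_\B$, so $|A|\cdot 1_{\R_\B}$ is not a zero divisor in the finite ring $\R_\B$ and is therefore a unit. Theorem~\ref{thm:distributive} now applies and yields the conclusion.

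The principal obstacle lies in (a), namely showing that pairwise non-isomorphic simple abelian Mal'cev factors force the congruence lattice of the product to be Boolean. The heart of this is the claim that any congruence of $\A_i \times \A_j$ with total projections onto both simple factors must equal the total relation when $\A_i \not\cong \A_j$; from there one bootstraps inductively to the full product using the simplicity of each factor. Without the non-isomorphism assumption the lattice can fail to be distributive (e.g., $(\Z_2,+)^2$), so Theorem~\ref{thm:distributive} would no longer be available.
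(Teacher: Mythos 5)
Your proposal is correct and follows essentially the same route as the paper: both verify the hypotheses of Theorem~\ref{thm:distributive} with $n=1$ by observing that pairwise non-isomorphism of the simple factors makes the congruence lattice Boolean and $\R_\A$ semisimple. You spell out two details the paper leaves implicit — that faithfulness of $\R_\A$ on the semisimple module $\A_0$ forces $J(\R_\A)=0$, and that coprimality of $|A|$ and $|B|$ makes $|A|\cdot 1_{\R_\B}$ a unit — but these are routine and do not change the argument.
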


\begin{proof}
 Since the factors $\A_i$ are pairwise non-isomorphic and simple, every congruence of $\A$ is a product congruence.
 That is, the congruence lattice of $\A$ is Boolean, in particular, distributive.
 Since $\A$ is polynomially equivalent to a direct product of simple $\R_\A$-modules, $\R_\A$ is semisimple.
 Thus $\A$ satisfies the assumptions of Theorem~\ref{thm:distributive} with $n=1$ and the result follows.
\end{proof}

%%% %%%%%%%%%%%%           Section 5: Clonoids Not Generated by Unary Functions           %%%%%%%%%%%%%%%

\section{Lower bounds on the number of clonoids from $\A$ to $\B$} \label{sec:number}

 In this section we give necessary conditions on the algebras $\A$ and $\B$ such that there are only finitely many
 clonoids between them. 
 We start by showing that if there exists $n\in\N$ such that every clonoid from $\A$ to a nontrivial $\B$ is generated by
 $n$-ary functions, then all subalgebras of $\A$ are generated by $n$ elements.

\begin{lemma} \label{noncyclic}
 Let $n\in\N$, let $\A$ be a finite algebra not all of whose subalgebras are generated by $n$ elements,
 and let $\B$ be a nontrivial algebra.
 Then not every clonoid from $\A$ to $\B$ is generated by $n$-ary functions.
\end{lemma}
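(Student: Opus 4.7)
My plan is to exhibit a single explicit clonoid from $\A$ to $\B$ that fails to be generated by any of its $n$-ary members. Choose a subalgebra $\A'$ of $\A$ which by hypothesis is not generated by $n$ elements. Since $\A$ is finite, fix any generating tuple $a=(a_1,\ldots,a_k)$ of $\A'$ with $k>n$, and two distinct elements $b_0,b_1\in B$. Define $f\colon A^k\to B$ by setting $f(x)=b_1$ if $\{x_1,\ldots,x_k\}$ generates $\A'$ and $f(x)=b_0$ otherwise. Since the $n$-ary members of $\langle f\rangle_{\A,\B}$ are precisely the $n$-ary $\A,\B$-minors of $f$, this clonoid is generated by its $n$-ary members if and only if $f$ is generated by its own $n$-ary minors, so it suffices to refute the latter.

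The structural observation driving the argument is that every $r\in R_{k,n}$ can be written as $r(x)=(v_1(w_1(x),\ldots,w_n(x)),\ldots,v_k(w_1(x),\ldots,w_n(x)))$ for $n$-ary terms $v_j$ and $k$-ary terms $w_l$ of $\A$ shared across all components; hence every entry of $r(x)$ lies in the subalgebra of $\A$ generated by $\{w_1(x),\ldots,w_n(x)\}$, which is at most $n$-generated. When $x\in(A')^k$ each $w_l(x)$ lies in $A'$ because $\A'$ is a subalgebra, so this at-most-$n$-generated subalgebra sits inside $\A'$ and is therefore properly contained in $\A'$ by hypothesis. Consequently the entries of $r(x)$ do not generate $\A'$, forcing $f(r(x))=b_0$ for every $x\in(A')^k$ and every $r\in R_{k,n}$.

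Now suppose $f$ is generated by its $n$-ary minors, so that an identity $f(x)=s(f(r_1(x)),\ldots,f(r_\ell(x)))$ holds for some $r_i\in R_{k,n}$ and $s\in\Clo(\B)^{(\ell)}$ as in~\eqref{eq-nary-generation}. Evaluating at $x=a$ forces $f(a)=s(b_0,\ldots,b_0)$. Evaluating at the diagonal tuple $(a_1,\ldots,a_1)\in(A')^k$ again gives $s(b_0,\ldots,b_0)$ on the right, while the left-hand side equals $b_0$ because the subalgebra generated by the single element $a_1$ is $1$-generated and hence, as $n\geq 1$, properly contained in $\A'$. Combining the two evaluations yields $f(a)=b_0$, contradicting $f(a)=b_1$. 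The only mildly delicate point is verifying the containment of the subalgebra generated by the $w_l(x)$ inside $A'$ when $x\in(A')^k$, which rests on $\A'$ being closed under term operations; once this is in place, the two-point evaluation closes the argument.
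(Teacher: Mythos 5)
Your argument is correct and takes a more direct route than the paper's. The paper builds two polymorphism clonoids $C = \Pol(U^2, =_B)$ (functions constant on $U$) and $D = \bigcap_{a,b\in U^n}\Pol(\sigma_{a,b},=_B)$ for a subalgebra $U$ generated by $n+1$ but not $n$ elements, shows $C^{(k)} = D^{(k)}$ for all $k\le n$, and separates them at arity $n+1$ using essentially the same indicator-of-generating-tuples function you call $f$; since $D^{(n)}=C^{(n)}\subseteq C$ while $D\not\subseteq C$, the clonoid $D$ cannot be generated by its $n$-ary members. You instead reason directly about the singly generated clonoid $\langle f\rangle$: every $r\in R_{k,n}$ maps $(A')^k$ into $k$-tuples over an $n$-generated (hence proper) subalgebra of $\A'$, so any right-hand side of the form in~\eqref{eq-nary-generation} is constant on $(A')^k$, whereas $f$ takes both values $b_0$ and $b_1$ there (at $a$ and at the diagonal $(a_1,\dots,a_1)$). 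Both proofs rest on the same structural fact that $n$-ary minors can only probe $n$-generated subalgebras, but yours dispenses with the relational Galois formalism and with the preliminary reduction to a subalgebra generated by exactly $n+1$ elements, which makes it a cleaner path to the stated conclusion; the paper's formulation has the small side benefit of exhibiting an explicit pair of relationally defined clonoids that agree through arity $n$ and first diverge at $n+1$.
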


\begin{proof}
 By assumption $\A$ has some subalgebra $U$ that is generated by $n+1$ elements but not by $n$.
  
 We construct clonoids $C$ and $D$ from $\A$ to $\B$ with $C^{(k)} = D^{(k)}$ for all $k\leq n$ but $C^{(n+1)} \ne D^{(n+1)}$.
 Let $\rho_U := U^2$ and let
\[ C := \Pol(\rho_U, =_B)\]
 be the clonoid of all functions from $A$ to $B$ that are constant on $U$.
 For $a,b\in A^n$, let $\sigma_{a,b}$ be the subalgebra of $\A^2$ that is generated by $(a_1,b_1),\dots,(a_n,b_n)$. Let
\[ D := \bigcap_{a,b\in U^n} \Pol(\sigma_{a,b},=_B). \]
 Then $f\colon A^k\to B$ is in $D$ iff $f(g_1(a),\dots,g_k(a)) = f(g_1(b),\dots,g_k(b))$ for all
 $a,b\in U^n$ and all $n$-ary term operations $g_1,\dots,g_k$ of $\A$.
 Clearly $C \subseteq D$. 

 To show $D^{(k)} \subseteq C^{(k)}$ for $k\leq n$, let $f\in D^{(k)}$ and $a,b\in U^k$.
 Repeating the last entry of $a,b$, respectively, we obtain $n$-tuples
 $a' := (a_1,\dots,a_k,\dots a_k), b' :=  (b_1,\dots,b_k,\dots b_k)$ over $U$.
 Since $(a_1,b_1),\dots, (a_k,b_k)$ are in $\sigma_{a',b'}$ and $f\in\Pol(\sigma_{a',b'},=_B)$,
 we have $f(a)=f(b)$. Thus $f \in C$ and $C^{(k)} = D^{(k)}$ for all $k\leq n$.

 To show that $C^{(n+1)} \neq D^{(n+1)}$ let $0,1$ be distinct elements in $B$ and define
\[ g \colon A^{n+1} \to B,\ (x_1,\dots,x_{n+1}) \mapsto \begin{cases} 1 & \text{ if } x_1, \ldots, x_{n+1} \text{ generate } U, \\
     0 & \text{ else.} \end{cases} \]
 Since there exist $n+1$ elements that generate $U$ by assumption, $g$ is not constant on $U$, hence not in $C$.
 However $g$ is constant $0$ on all proper subalgebras of $U$ and in particular on all $n$-generated subalgebras.
 Thus $g\in D^{(n+1)} \setminus C^{(n+1)}$. 
\end{proof}

 Even if $\A$ is a cyclic module, not all clonoids from $\A$ to $\B$ may be generated by unary functions by the following.
 
\begin{lemma} \label{JacobsonRadical}
 Let $\R$ be a finite ring with nonzero Jacobson radical, let $\A$ be the left regular module over $\R$, and let
 $\B$ be a nontrivial algebra.
 Then not every clonoid from $\A$ to $\B$ is generated by unary functions.
\end{lemma}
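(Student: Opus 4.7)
The plan is to exhibit, for an arbitrary nontrivial $\B$, a specific clonoid $C$ from $\A$ to $\B$ together with a witness function $g \in C$ that provably does not lie in the subclonoid generated by $C^{(1)}$. Since $\R$ is finite with nonzero Jacobson radical $J := J(\R)$, $J$ is nilpotent, so we may fix a nonzero $j \in J$ with $jJ = 0$ (e.g. a nonzero element of the last nonzero power $J^{n}$). Fix also distinct $b_0, b_1 \in B$; by Lemma~\ref{lem:subalgebras} applied to the subalgebra of $\B$ generated by $\{b_0, b_1\}$, we may assume $\B$ is finite.

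The critical observation is that for the pair $a := (1, 0)$ and $b := (1+j, j)$ in $A^2$, every $\R$-linear form $\ell(x_1, x_2) = r_1 x_1 + r_2 x_2$ satisfies
\[ \ell(b) - \ell(a) = r_1 j + r_2 j = (r_1 + r_2)\, j \in jA \subseteq J, \]
so $\ell(a)$ and $\ell(b)$ always lie in the same $J$-coset of $A$. Since $\A$ is a module, every term function on $\A$ is an $\R$-linear form; hence if one can exhibit a clonoid $C$ whose unary part $C^{(1)}$ consists entirely of functions constant on $J$-cosets, then any expression $t(f_1(\ell_1(x)), \dots, f_m(\ell_m(x)))$ with $f_i \in C^{(1)}$, $t \in \Clo(\B)$, $\ell_i \in \Clo(\A)$ takes the same value at $a$ as at $b$. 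A function $g \in C$ that separates $a$ from $b$ would therefore witness that $C$ is not generated by its unary part.

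The design of such a clonoid $C$ proceeds via the relational Galois correspondence of Section~\ref{sec:Galois}: we set $C := \Pol(R, S)$ for a suitable pair of subalgebras $R \le \A^n$ and $S \le \B^n$. Taking $S$ to be the diagonal $\Delta_\B^{(n)} \le \B^n$ — which is automatically a subalgebra regardless of the operations of $\B$, since term operations preserve equality — makes the whole construction robust to the arbitrariness of $\B$. The source-side relation $R$ is engineered so that the polymorphism condition at arity $1$ forces unary polymorphisms to be constant on $J$-cosets, while at arity $2$ the condition leaves room for a function $g$ distinguishing $a$ and $b$.

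The main obstacle is achieving this asymmetric behavior of $R$ across arities: closing $C$ under composition with $\Clo(\A)$ on the right couples the unary and binary constraints, and a symmetric choice of $R$ (e.g.\ the congruence modulo $jA$) would propagate the unary restriction to arity $2$, forcing $g(a) = g(b)$ as well. Resolving this tension requires an asymmetric design of $R$ that exploits the non-invertibility of $j \in J$ — for instance by combining relations that distinguish the two coordinates. The combinatorial verification that the resulting $\Pol(R, \Delta_\B^{(n)})$ has the required split between its unary and binary parts is the technical heart of the proof.
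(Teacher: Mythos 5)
Your high-level strategy is sound and closely mirrors the paper's: find a pair of points that cannot be separated by any clonoid generated by unary functions constant on cosets of a small ideal, but that \emph{can} be separated by a suitable binary function in some clonoid. Your key observation --- that for $a=(1,0)$ and $b=(1+j,j)$ every $\R$-linear form $\ell$ satisfies $\ell(a)\equiv\ell(b)\pmod{J}$ --- is correct and is the right kind of computation. However, the proposal stops exactly where the lemma's content begins. You do not construct the relation $R$, do not exhibit the witness function $g$, and do not verify that $\Pol(R,\Delta_\B^{(n)})$ has the required asymmetry between its unary and binary parts; you yourself flag this final step as ``the technical heart of the proof.'' As written, this is an outline of an approach, not a proof.

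For comparison, the paper resolves the tension you identify by a concrete device. It fixes a nonzero two-sided ideal $I\le J$ with $I^2=0$ and constructs two clonoids $C\subsetneq D$ with $C^{(1)}=D^{(1)}$, where
\[ D := \bigcap_{a\in I}\Pol(\sigma_a,=)\ \cap\ \bigcap_{M<\A}\Pol(\rho_M,=), \qquad \sigma_a := R(1,1+a)\le\A^2. \]
The cyclic submodule $\sigma_a$ only relates pairs $(r,r(1+a))$, a multiplicative shift depending on $r$; this forces unary polymorphisms to be constant on $I$-cosets (after splitting into the cases $Rx=R$, handled by $\sigma_a$, and $Rx\ne R$, handled by the submodule relations $\rho_M$), yet leaves room for the explicit binary witness $g(x,y)=1$ if $x=y\equiv_I 1$ and $0$ otherwise, which respects every $\sigma_a$ precisely because $I^2=0$ but is not constant modulo $\equiv_I$. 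Without a comparably explicit relation and witness, your argument does not close.

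Two smaller points: the reduction to finite $\B$ is unnecessary (only two distinct elements of $B$ are used), and Lemma~\ref{lem:subalgebras} concerns subalgebras of the domain $\A$, not the codomain $\B$, so it does not justify that reduction as cited.
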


\begin{proof}
 Since $\R$ is a finite ring, its Jacobson radical $J := J(\R)$ is nilpotent.
 It follows that $\R$ has a nonzero ideal $I\leq J$ such that $I^2=0$.
 Note that $I$ is contained in every maximal left ideal of $\R$ since $J$ is the intersection of all maximal left ideals of $\R$.

 We construct clonoids $C,D$ from $\A$ to $\B$ such that $C^{(1)} = D^{(1)}$ but $C^{(2)} \neq D^{(2)}$.
 For any submodule $M$ of $\A$, let $\rho_M := M^2$. Then
 \[ C := \Pol( \equiv_I, =) \cap \bigcap_{M < \A} \Pol(\rho_M,=)  \]
 is the clonoid of functions from $\A$ to $\B$ that are constant on blocks modulo $I$ and constant on every proper
 submodule of $\A$.

 For $a\in I$, let $\sigma_a := R(1,1+a)$ be a submodule of $\A^2$ 
 and let
\[ D := \bigcap_{a\in I} \Pol(\sigma_a,=) \cap \bigcap_{M < \A} \Pol(\rho_M,=). \] 
 Since $\sigma_a$ is contained in $\equiv_I$, we have $\Pol(\equiv_I, =) \subseteq \Pol(\sigma_a, =)$ for all $a\in I$.
 Hence
\begin{equation} \label{eq:CinD}
 C \subseteq D.
\end{equation}
 We claim that
\begin{equation} \label{JacobsonRadicalClaim}
  C^{(1)} = D^{(1)}. 
\end{equation}
 For the proof let $f \in D^{(1)}$ and $x\in A$.
 First consider the case that $Rx\neq R$. Then $x$ is contained in some maximal submodule $M$ of $\A$ and
 $I\leq M$ since $I$ is contained in every maximal submodule. Hence $x+I \subseteq M$ implies $f(x+I)=f(x)$. 

 Next suppose that $Rx = R$. Then $x$ has a multiplicative left inverse in $R$. Since $I$ is finite, $xI = I$.
 Since $f(x) = f(x+xI)$ by assumption, we see that $f(x) = f(x+I)$.

 Hence $f$ is constant on $I$-cosets and $f\in C$. Together with~\eqref{eq:CinD} this implies~\eqref{JacobsonRadicalClaim}.

 Next we claim that
 \begin{equation} \label{eq:C2neD2}
  C^{(2)} \ne D^{(2)}.
\end{equation}
 To see this let $0,1$ be distinct elements in $B$ and define
 \[g \colon A^2 \rightarrow B, \, (x,y) \mapsto \begin{cases} 1 & \text{ if } x=y \equiv_I 1, \\
     0 & \text{ else.} \end{cases} \]
 Clearly $g \notin C$ as for nonzero $a \in I$ we have $(1,1) \equiv_I (1+ a, 1)$ but $g(1,1) = 1 \ne 0 = g(1+a,1)$. 

 For $g \in D$ we first show that $g \in \Pol(\sigma_a,=)$ for all $a\in I$. Let $(x,y)\in A^2$ and $a\in I$.

 If $x\not\equiv_I 1$, then also $x(1+a) = x+xa \not\equiv_I 1$ and $g(x,y) = 0 = g(x(1+a),y(1+a))$.
 The symmetric argument works if $y\neq_I 1$.
 Hence we assume $x,y\in 1+I$ in the following, say $x=1+u$ and $y = 1+v$ for some $u,v\in I$.
 If $u\neq v$, then $g(x,y)=0$ and also
\[ g(x(1+a), y(1+a)) = g(1+u+a+\underbrace{ua}_{=0}, 1+v+a+\underbrace{va}_{=0}) = 0. \]
 Here we used the assumption that $I^2=0$. 
 If $u = v$, then similarly $g(x,y)=1$ and
\[ g(x(1+a), y(1+a)) = g(1+u+a, 1+u+a) = 1. \]
 Hence $g \in \Pol(\sigma_a,=)$.

 Now let $M$ be a proper submodule of $\A$ and show $g\in\Pol(\rho_M,=)$. Note that by the nilpotence of $J$,
 every element in $1+J$ is a unit in $\R$. Hence in particular $1+I$ and $M$ are disjoint. Thus $g(M,M) = 0$.

 Therefore $g\in D^{(2)} \setminus C^{(2)}$ proving~\eqref{eq:C2neD2} and that $D$ is not generated by its unary functions.
\end{proof}

 Next we prove Theorem \ref{CommutativeSummary} which characterizes the finite modules over commutative
 rings for which all clonoids are generated by their unary functions.

 \begin{proof}[Proof of Theorem \ref{CommutativeSummary}.]

  The implication $(2) \Rightarrow (1)$ follows from Theorem~\ref{thm:distributive}. 

 For $(1) \Rightarrow (2)$, assume that every clonoid from the $\R$-module $\A$ to $\B$ is generated by unary functions.
 By Lemma~\ref{noncyclic} $\A$ is a cyclic module. So $\A$ is isomorphic to $R/L$ for some left ideal $L$ of $\R$.
 Since $\R$ is commutative, $L$ is a (two-sided) ideal of $\R$. Now $L (\R/L) = 0$ yields $LA = 0$ and $L=0$ since $\A$ is a
 faithful $\R$-module. Hence $\A$ is isomorphic to the regular $\R$-module. Since the Jacobson radical $J(\R)=0$ by
 Lemma~\ref{JacobsonRadical} and $\R$ is finite, the Wedderburn-Artin Theorem yields that $\R$ is isomorphic to a direct product of
 matrix rings over finite fields. Since $\R$ is commutative, each matrix ring has dimension $1 \times 1$ and $\R$ is a direct product
 of finite fields.
\end{proof}

 Finally we construct an infinite ascending chain of clonoids between any two finite modules whose orders
 have a nontrivial common divisor to prove Theorem~\ref{thm:notcoprime}.
 So in this case we attain the upper bound guaranteed by Theorem~\ref{thm:cube}.

\begin{proof}[Proof of Theorem \ref{thm:notcoprime}.]
 Let $p$ be a common prime divisor of $|A|$ and $|B|$. Then $\A$ has a simple quotient $\A'$ of $p$-power order and
 $\B$ has a simple submodule $\B'$ of $p$-power order. By the Jacobson Density Theorem $\A'$ and $\B'$ are
 modules over full matrix rings over (not necessarily the same) finite fields of characteristic $p$.
 In particular we can expand $\A'$ to a module $\A^+ \cong \Z_p^m$ over the matrix ring $\Z_p^{m\times m}$ and $\B'$ to a module
 $\B^+ \cong \Z_p^n$ over $\Z_p^{n\times n}$ for some natural numbers $m$ and $n$. 

 For ease of notation assume $A^+ = \Z_p^m$ and $B^+ = \Z_p^n$ in the following. Let $D$ be the clonoid of all additive functions
 from $\A^+$ to $\B^+$.
 For $k\in\N$ let
\[ f_k\colon (A^+)^k\to B^+,\ (x_1,\dots,x_k) \mapsto (x_{1,1}x_{2,1} \cdots x_{k,1},\underbrace{0,\dots,0}_{n-1}). \] 
 Here $x_{1,1}x_{2,1} \cdots x_{k,1}$ denotes the usual product in $\Z_p$.
 We claim that
\begin{equation} \label{sequence}
 D \subsetneq D+\langle f_2 \rangle \subsetneq D+\langle f_2, f_3 \rangle \subsetneq \cdots
\end{equation}
 is an infinite strictly ascending sequence of clonoids from $\A^+$ to $\B^+$.

 To see this let $\pi_i\colon\Z_p^n\to \Z_p$ denote the $i$-th projection for $i\in [n]$.
 From the definition of $f_k$ we see that $\pi_1 f_k$ is a polynomial function on $\Z_p$ in variables $x_{1,1},\dots,x_{k,m}$
 of total degree $k$.
 It follows that for every $g \in D+\langle f_2, \ldots, f_k \rangle$ the components 
 $\pi_i g$ are induced by polynomials of total degree at most $k$ over $\Z_p$ for all $i\in [n]$.
 In particular $f_{k} \not\in D+\langle f_1, \ldots, f_{k-1} \rangle$ for $k\geq 2$ and~\eqref{sequence} is proved.

 Now let $E$ be the clonoid of all additive functions from $\A$ to $\B$. For $k\geq 2$, let $C_k$ be the clonoid from $\A$ to
 $\B$ that is obtained by lifting the functions in $\langle f_2,\dots,f_k \rangle_{\A^+,\B^+}$ by Lemmas~\ref{lem:expansions}
 and~\ref{quotients}. Conversely, by restricting to the domain $\A^+$ and codomain $\B^+$, we see that $E+C_k$ induces
 $D+\langle f_2,\dots,f_k \rangle_{\A^+,\B^+}$. Hence~\eqref{sequence} yields that
\[ E \subsetneq E+C_2 \subsetneq E+C_3 \subsetneq \cdots \]
 is an infinite strictly ascending series of clonoids from $\A$ to $\B$.
\end{proof}

\section*{Acknowledgment}

 The authors thank the anonymous referee for their helpful comments on the presentation of this paper.

%%%%%%%%%%%%%%%          Bibliography           %%%%%%%%%%%%%%%

\end{document}